\theoremstyle{plane}
\newtheorem{thm}{Theorem}[section]
\newtheorem{prop}[thm]{Proposition}%[section]
\newtheorem{lem}[thm]{Lemma}%[section]
\newtheorem{cor}[thm]{Corollary}%[section]
\newtheorem{fact}[thm]{Fact}%[section]
\theoremstyle{definition}
\newtheorem{dfn}[thm]{Definition}%[section]
\newtheorem{ex}[thm]{Example}%[section]
\theoremstyle{remark}
\newtheorem{rem}[thm]{Remark}%[section]
 \newtheorem*{acknowledgements}{Acknowledgements}
\newcommand{\rank}{\operatorname{rank}}
\newcommand{\re}{\operatorname{Re}}
\newcommand{\im}{\operatorname{Im}}
\newcommand{\sgn}{\operatorname{sgn}}
\newcommand{\R}{\bm{R}}
\newcommand{\C}{\bm{C}}
\newcommand{\n}{\bm{n}}
\newcommand{\x}{\bm{x}}
\newcommand{\Sig}{\Sigma}
\renewcommand{\phi}{\varphi}
\newcommand{\eps}{\varepsilon}
\newcommand{\inner}[2]{\left\langle{#1},{#2}\right\rangle}
\newcommand{\linner}[2]{\left\langle{#1},{#2}\right\rangle_L}
\numberwithin{equation}{section}
\title[Geometric properties of surfaces given by certain formulae]
{Geometric properties near singular points of surfaces given by certain representation formulae}
\author[Y. Matsushita]{Yoshiki Matsushita}
\address[Y. Matsushita]{Department of Mathematics, Kyushu University, 744 Motooka, Fukuoka, 819-0395, Japan}
\email{ma218015@math.kyushu-u.ac.jp}
\author[T. Nakashima]{Takuya Nakashima}
\address[T. Nakashima]{Department of Mathematics, Kyushu University, 744 Motooka, Fukuoka, 819-0395, Japan}
\email{2ma18040m@kyudai.jp}
\author[K. Teramoto]{Keisuke Teramoto}
\address[K. Teramoto]{Institute of Mathematics for Industry, Kyushu University, 744 Motooka, Fukuoka 819-0395, Japan}
\email{k-teramoto@imi.kyushu-u.ac.jp}
\thanks{The third author was partially supported by Grant-in-Aid for JSPS KAKENHI Grant Number JP19K14533.}
\subjclass[2010]{53A05, 53A55, 57R45}
\keywords{frontal, singular point, singular curvature, limiting normal curvature}
\date{\today}
\begin{document}
%%%%%TEXT START%%%%%

\maketitle

\begin{abstract}
We investigate geometric properties of surfaces given by certain formulae. 
In particular, we calculate the singular curvature and the limiting normal curvature of such surfaces 
along the set of singular points consisting of singular points of the first kind. 
Moreover, we study fold singular points of smooth maps. 
\end{abstract}

%%%%%SECTION 1%%%%%
\section{Introduction}\label{sec:intro}
Let $\R^3$ be the Euclidean $3$-space with canonical inner product $\inner{\cdot}{\cdot}$, 
and let $D$ be a simply-connected domain in the complex plane $(\C;z=u+iv)$, where $i=\sqrt{-1}$.
 Then we consider two representation formulae for maps from $D$ to $\R^3$. 
To state the first representation formula, we give a definition. 
\begin{dfn}\label{def:hol-data}
Let $g$ be a meromorphic function and $\omega=\hat{\omega}dz$ a holomorphic $1$-form defined on $D$. 
Then the pair $(g,\omega)$ is said to be the {\it holomorphic data} if 
it satisfies the following conditions: 
\begin{enumerate}
\item $g^2\omega$ is holomorphic on $D$. 
\item $(1+|g|^2)^2|\omega|^2\neq0$ on $D$. 
\item $1-|g|^2$ does not vanish identically. 
\end{enumerate}
\end{dfn}

Using the holomorphic data $(g,\omega)$, 
we define a map $f\colon D\to\R^3$ by 
\begin{equation}\label{eq:W-rep}
f = \re\left(\int(-2g,1+g^2, i(1-g^2))\omega\right).
\end{equation}
It is known that if we consider $f$ as a map to the Minkowski $3$-space $\R^3_1$
with pseudo-inner product $\linner{\bm{x}}{\bm{y}}=-x_1y_1+x_2y_2+x_3y_3$ 
for any $\bm{x}=(x_1,x_2,x_3)$ and $\bm{y}=(y_1,y_2,y_3)\in\R^3$, 
then this is a Weierstrass-type representation formula for a {\it maxface}, which is a maximal surface 
(i.e., spacelike zero mean curvature surface) with certain singularities introduced in \cite{maxface}. 
In such a case, the pair $(g,\omega)$ is called the {\it Weierstrass data} of a maxface. 
There are several studies on maximal surfaces and maxfaces (\cite{gene-max,ferlop,fkkrsuyy,fkkruy2,fkkruy,fsuy,kimyang1,kimyang2,koba,ku,maxface}). 
It is known that generic singularities of maxfaces are a cuspidal edge, a swallowtail and a cuspidal cross cap \cite{fsuy}. 
Moreover, there are maxfaces with a cuspidal butterfly and a cuspidal $S_1^-$ singularity \cite{co2,ot}. 
Criteria for these singularities using the data $(g,\omega)$ are known \cite{fsuy,ot,maxface}. 
Moreover, we show that there are no surfaces given by \eqref{eq:W-rep} with cuspidal $S_k$ singularities for $k\geq2$ (Theorem \ref{prop:no-cSk}). 

To state the other one, we define some notions. 
\begin{dfn}\label{def:generalized-cmc}
Let $\hat{\C}=\C\cup\{\infty\}$ be the Riemann sphere. 
Let $D\subset\C$ be a simply-connected domain and let $g\colon D\to\hat{\C}$ be a smooth map.  
Then $g$ is said to be a {\it regular extended harmonic map} if 
\begin{enumerate}
\item $g_{z\overline{z}}+2(1-|g|^2)\overline{g}g_{z}\overline{\hat{\omega}}=0$ holds, 
\item $\omega=\hat{\omega}dz$ can be extended to a $1$-form of class $C^1$ across $\{p\in D\ |\ |g(p)|=1\}$,
\end{enumerate}
where 
\begin{equation}\label{eq:cmc-omega}
\hat{\omega}=\dfrac{\overline{g}_z}{(1-|g|^2)^2}
\end{equation}
and $z$ is a complex coordinate of $D$. 
Moreover, a regular extended harmonic function $g$ is called an {\it extended harmonic map} if the following conditions hold: 
\begin{itemize}
\item $\hat{\omega}$ never vanishes on $\{p\in D\ |\ |g(p)|<\infty\}$, and 
\item $g^2\hat{\omega}$ does not vanish on $\{p\in D\ |\ |g(p)|=\infty\}$.
\end{itemize}
\end{dfn}
These notions are introduced in \cite{umeda}. 
Using an extended harmonic map $g\colon D\to\hat{\C}$ and a non-zero constant $H$, 
we define a map $f\colon D\to\R^3$ by 
\begin{equation}\label{eq:generalized-cmc}
f=\dfrac{2}{H}\re\left(\int(-2g,1+g^2,i(1-g^2))\hat{\omega}dz\right).
\end{equation}
If we treat $f$ as in \eqref{eq:generalized-cmc} as a map from $D$ to $\R^3_1$, 
this gives a Kenmotsu-type representation formula for an {\it extended spacelike constant mean curvature $($\/CMC\/$)$ $H$ surface}, 
which is a spacelike CMC surface with certain singularities (cf. \cite{an,kenmotsu,umeda}). 
It is known that criteria for a cuspidal edge, a swallowtail and a cuspidal cross cap on extended spacelike CMC surfaces 
are given in terms of $g$ and $\hat{\omega}$ (\cite{umeda}).  

As we mentioned above, surfaces given by \eqref{eq:W-rep} or \eqref{eq:generalized-cmc} have singularities in general. 
Moreover, they belong to the class of singular surfaces in $\R^3$ called a {\it frontal} or a {\it front}, 
which admit the unit normal vector field even at singular points (see Section \ref{sec:frontal}). 
Singular points such as cuspidal edges, cuspidal cross caps and cuspidal $S_k$ singularities 
belong to the class of {\it singular points of the first kind} of frontal surfaces (cf. \cite{msuy}). 
The set of such singular points consists of  regular curves ( called {\it singular curves}) on the source 
and their images are regular space curves. 
Along such curves, several geometric invariants are introduced ({\cite{ms,msuy,geomfront}}). 
In particular, the {\it singular curvature} $\kappa_s$ and the {\it limiting normal curvature} $\kappa_\nu$ are representative 
because they satisfy $\kappa^2=\kappa_s^2+\kappa_\nu^2$, 
where $\kappa$ is the curvature of a singular image as a space curve in $\R^3$ (cf. \cite{ms,msuy}). 
Moreover, $\kappa_s$ is an {\it intrinsic invariants} of a frontal, 
and its sign relates to the {\it convexity} and {\it concavity} (see Figure \ref{fig:ks}) (\cite{hhnsuy,geomfront}). 
Further, $\kappa_\nu$ relates to the boundedness of the Gaussian curvature of a frontal near a non-degenerate singular point (\cite{msuy,geomfront}).

\begin{figure}[htbp]
  \begin{center}
    \begin{tabular}{c}

      % 1
      \begin{minipage}{0.33\hsize}
        \begin{center}
          \includegraphics[width=3.5cm]{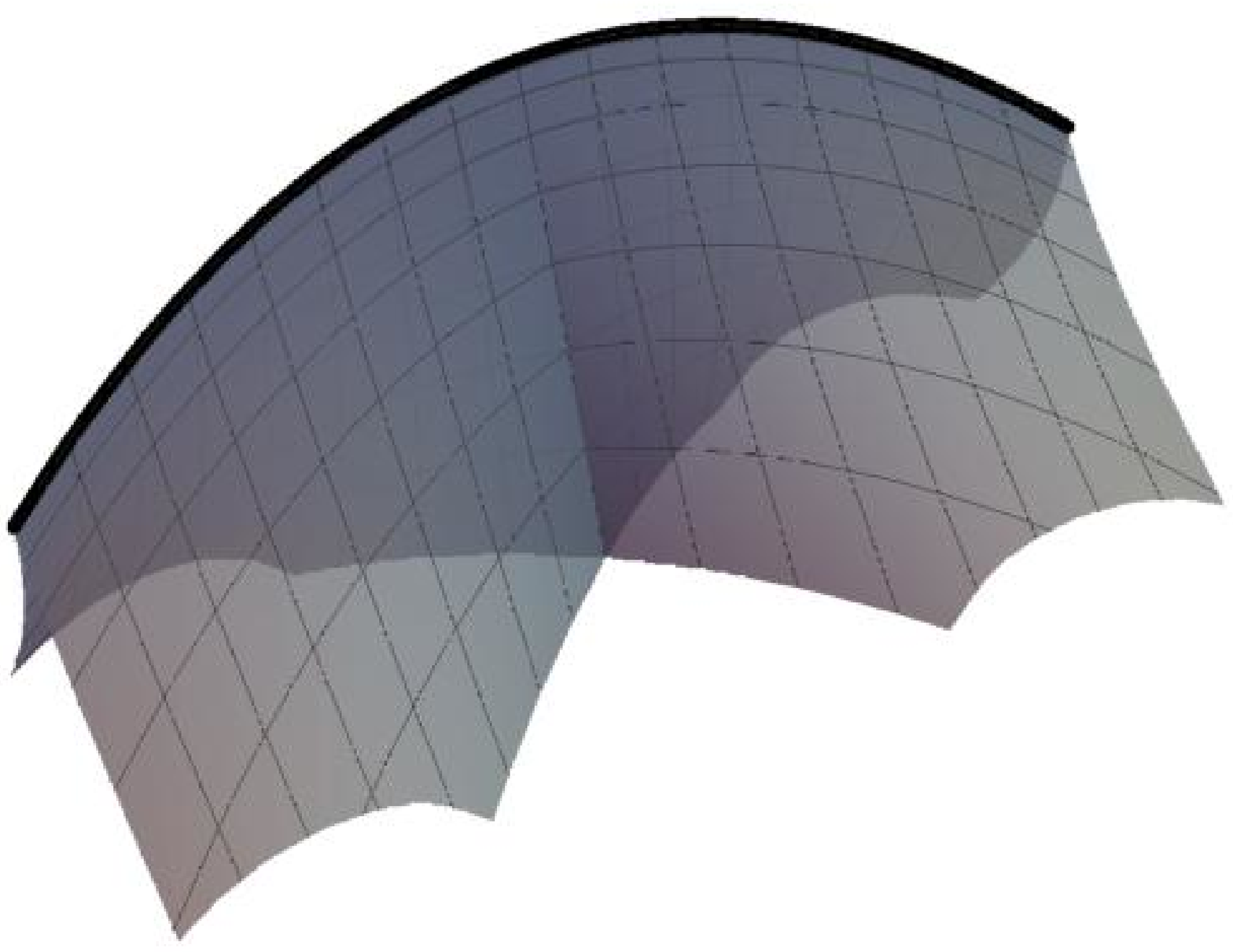}
         % \hspace{1cm} 
        \end{center}
      \end{minipage}

      % 2
      \begin{minipage}{0.33\hsize}
        \begin{center}
          \includegraphics[width=3cm]{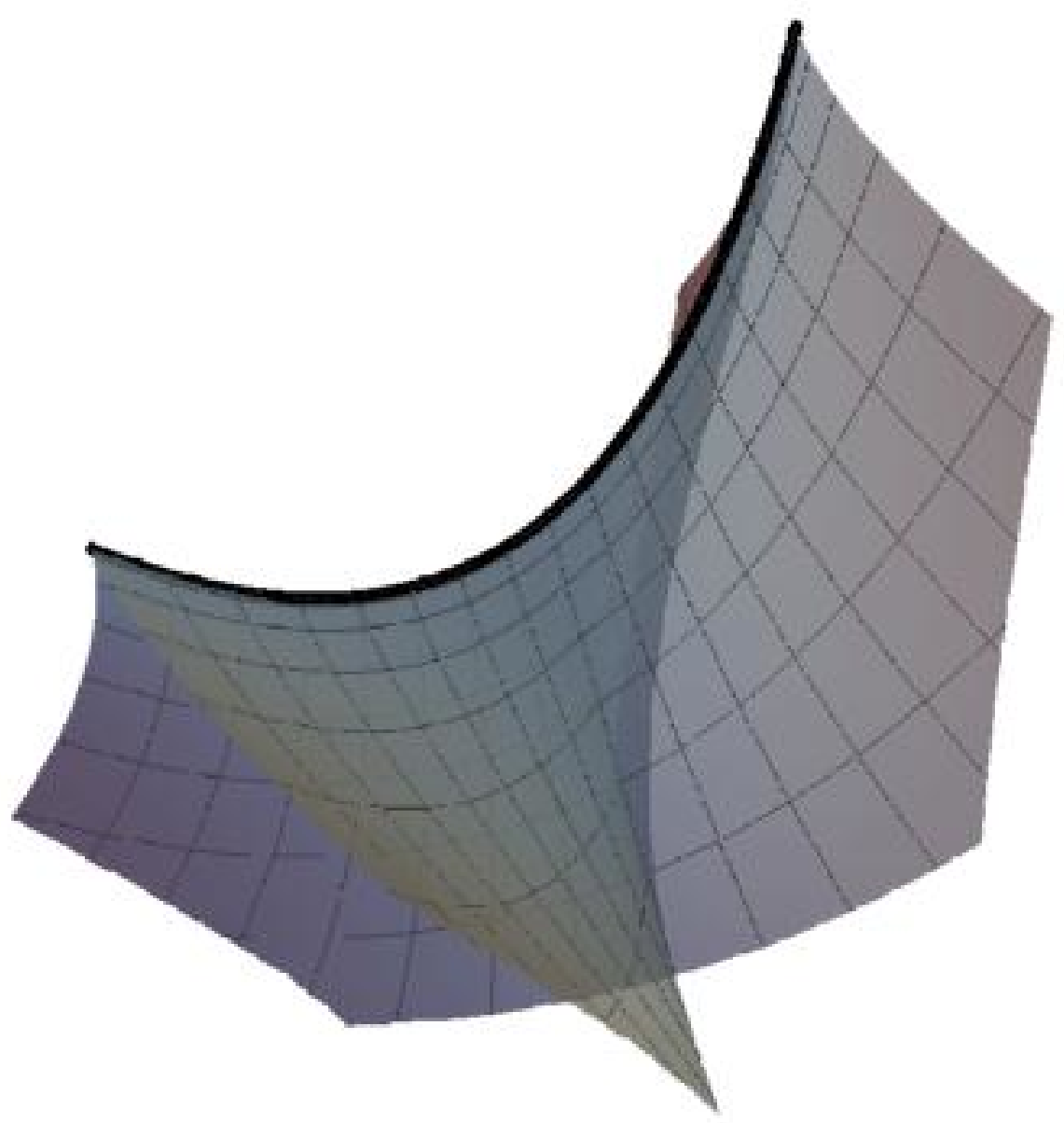}
          %\hspace{1cm} 
        \end{center}
      \end{minipage}

    \end{tabular}
    \caption{Cuspidal cross caps with positive $\kappa_s$ (left) and negative $\kappa_s$ (right).}
    \label{fig:ks}
  \end{center}
\end{figure}

In this paper, we study geometric properties of surfaces given by \eqref{eq:W-rep} or \eqref{eq:generalized-cmc} in $\R^3$ 
near a singular point of the first kind. 
In particular, we focus on the singular curvature and the limiting normal curvature at singular points of such surfaces.
We show that the singular curvature is negative along the singular curve consisting of singular points of the first kind, 
and the limiting normal curvature vanishes along the singular curve for surfaces given 
by \eqref{eq:W-rep} or \eqref{eq:generalized-cmc} (Theorem \ref{thm:ks}). 
This implies that the Gaussian curvature $K_{\mathrm{E}}$ 
of a frontal in $\R^3$ given by \eqref{eq:W-rep} or \eqref{eq:generalized-cmc} is bounded 
near non-degenerate singular points by the general theory (\cite{msuy,geomfront}). 
Especially, we show relationships between signs of the Gaussian curvature and the singular curvature 
near singular points of the first kind (Corollary \ref{cor:ks-Gauss}). 
Moreover, one can see that the Gauss map $\n$ of a surface $f\colon D\to\R^3$ given by \eqref{eq:W-rep} or \eqref{eq:generalized-cmc} 
has singularities along the singular curve of $f$. 
Thus we investigate types of singularities of $\n$ (Proposition \ref{prop:sing-n}).

Further, we give a certain characterization for a fold singular point of a frontal surface (Theorem \ref{thm:crit-fold}).

%%%%%SECTION 2%%%%%
\section{Preliminaries}\label{sec:prelim}
\subsection{Frontal}\label{sec:frontal}
We review some notions of frontal surfaces quickly. 
For details, see \cite{a,agv,ifrt,krsuy,msuy,ms,geomfront}.

Let $f\colon\Sig\to\R^3$ be a $C^\infty$ map, where $\Sig$ is an open domain in $\R^2$ 
and $\R^3$ is the Euclidean $3$-space with canonical inner product $\inner{\cdot}{\cdot}$. 
Then $f$ is a {\it frontal surface} (or a {\it frontal} for short) 
if there exists a $C^\infty$ map $\n\colon\Sig\to S^2$ such that 
$\inner{df_q(X)}{\n(q)}=0$ holds for any $q\in \Sig$ and $X\in T_q\Sig$, where $S^2$ denotes the standard unit sphere in $\R^3$. 
%and $\inner{\cdot}{\cdot}$ is the canonical inner product of $\R^3$. 
The map $\n$ is called a {\it unit normal vector} or the {\it Gauss map} of $f$. 
A frontal $f$ is called a {\it front} if the pair $(f,\n)\colon\Sig\to\R^3\times S^2$ gives an immersion. 

We fix a frontal $f$. 
A point $p\in \Sig$ is a {\it singular point} of $f$ if $\rank df_p<2$ holds. 
We denote by $S(f)$ the set of singular points of $f$. 
On the other hand, we define a function $\lambda\colon\Sig\to\R$ by 
\begin{equation}\label{eq:lambda}
\lambda(u,v)=\det(f_u,f_v,\n)(u,v),
\end{equation}
where $(u,v)$ is a local coordinate system on $\Sig$. 
This function $\lambda$ is called the {\it signed area density function}. 
For the function $\lambda$, it is known that there exist functions $\hat{\lambda}$ and $\mu$ such that 
$\lambda=\hat{\lambda}\cdot\mu$, $\hat{\lambda}^{-1}(0)=S(f)$ and $\mu>0$ on $\Sig$. 
We call $\hat{\lambda}$ the {\it singularity identifier} of $f$.  

A singular point $p\in S(f)$ of a frontal $f$ is {\it non-degenerate} if $(\hat{\lambda}_u(p),\hat{\lambda}_v(p))\neq(0,0)$. 
Take a non-degenerate singular point $p$. 
Then there exist a neighborhood $U(\subset \Sig)$ of $p$ and a regular curve $\gamma=\gamma(t)\colon(-\eps,\eps)\to U$ $(\eps>0)$
with $\gamma(0)=p$ such that $\hat{\lambda}(\gamma(t))=0$ on $U$ by the implicit function theorem. 
We call the curve $\gamma$ a {\it singular curve}. 
Further, we call the image $\hat{\gamma}=f\circ\gamma$ of a singular curve $\gamma$ by $f$ a {\it singular locus}. 
Moreover, since a non-degenerate singular point $p$ satisfies $\rank df_p=1$, 
there exists a non-zero vector field $\eta$ on $U$ such that $df_q(\eta_q)=0$ for any $q\in S(f)\cap U$. 
This vector field $\eta$ is called a {\it null vector field}. 
Further, one can take a vector field $\xi$ on $U$ so that $\xi$ is tangent to $\gamma$ on $S(f)\cap U$. 
We call the direction of $\xi$ along $\gamma$ the {\it singular direction}. 
A non-degenerate singular point $p$ is of the {\it first kind} if $\xi$ and $\eta$ are linearly independent at $p$. 
Otherwise, it is said to be of the {\it second kind}. 

\begin{dfn}\label{def:singularities}
\begin{enumerate}
\item Let $f,g\colon(\R^m,0)\to(\R^n,0)$ be $C^\infty$ map-germs. 
Then $f$ and $g$ are {\it $\mathcal{A}$-equivalent} 
if there exist diffeomorphism-germs $\phi\colon(\R^m,0)\to(\R^m,0)$ on the source 
and $\Phi\colon(\R^n,0)\to(\R^n,0)$ on the target such that $\Phi\circ f\circ\phi^{-1}=g$ holds. 
\item Let $f\colon(\R^2,0)\to(\R^3,0)$ be a $C^\infty$ map-germ. 
Then 
\begin{itemize}
\item $f$ at $0$ is a {\it cuspidal edge} %(or an {\it ordinary cuspidal edge}) 
if $f$ is $\mathcal{A}$-equivalent to the germ 
$(u,v)\mapsto(u,v^2,v^3)$ at $0$. 
\item $f$ at $0$ is a {\it swallowtail} if $f$ is $\mathcal{A}$-equivalent to the germ $(u,v)\mapsto(u,3v^4+uv^2,4v^3+2uv)$ at $0$. 
\item $f$ at $0$ is a {\it cuspidal butterfly} if $f$ is $\mathcal{A}$-equivalent to the germ 
$(u,v)\mapsto(u,4v^5+uv^2,5v^4+2uv)$ at $0$. 
\item $f$ at $0$ is a {\it cuspidal cross cap} if $f$ is $\mathcal{A}$-equivalent to the germ 
$(u,v)\mapsto(u,v^2,uv^3)$ at $0$.
\item $f$ at $0$ is a {\it cuspidal $S_k^\pm$ singularity} $(k\geq1)$ if $f$ is $\mathcal{A}$-equivalent to the germ 
$(u,v)\mapsto(u,v^2,v^3(u^{k+1}\pm v^2))$ at $0$. 
\item $f$ at $0$ is a {\it $5/2$-cuspidal edge} if $f$ is $\mathcal{A}$-equivalent to the germ 
$(u,v)\mapsto(u,v^2,v^5)$ at $0$. 
\end{itemize}
\end{enumerate}
\end{dfn}
We note that these singularities are all non-degenerate frontal singularities. 
Moreover, a cuspidal edge, a cuspidal cross cap, a cuspidal $S_k^\pm$ singularity and a $5/2$-cuspida edge are 
of the first kind, but a swallowtail and a cuspidal butterfly are of the second kind. 
Criteria for these singularities are known (see \cite{hks,krsuy,Aksingul,saji}).
We remark that certain dualities of singularities for maxfaces and generalized spacelike CMC surfaces are known (\cite{fsuy,umeda,hks}). 

We next consider the geometric invariants of a frontal at a singular point of the first kind. 
As discussions above, one can take $\xi$ and $\eta$ around a singular point of the first kind. 
Using these vector fields, we define two geometric invariants as follows:
\begin{align}\label{eq:invariants}
\begin{aligned}
\kappa_s(\gamma(t))&=\left.\eps_{\gamma}\dfrac{\det(\xi f,\xi\xi f,\n)}{|\xi f|^3}\right|_{(u,v)=\gamma(t)},\quad 
\kappa_\nu(\gamma(t))=\left.\dfrac{\inner{\xi\xi f}{\n}}{|\xi f|^2}\right|_{(u,v)=\gamma(t)},\\
\end{aligned}
\end{align}
where $\eps_{\gamma}=\sgn(\det(\gamma',\eta)\cdot\eta\lambda)=\sgn(\det(\xi,\eta)\cdot\eta\hat{\lambda})$ 
along the singular curve $\gamma$ and $|\cdot|$ is the standard norm of $\R^3$. 
The invariants $\kappa_s$ and $\kappa_\nu$ are called the {\it singular curvature} and the {\it limiting normal curvature}, respectively. 
We remark that $\kappa_s$ is an intrinsic invariant and its sign has a geometrical meaning (\cite{hhnsuy,geomfront}). 
Moreover, $\kappa_\nu$ relates to the behavior of the Gaussian curvature (\cite{msuy}). 
For more details and other invariants at singular points of the first kind, see \cite{hhnsuy,ms,nuy,msuy,geomfront}. 

\subsection{Singularities of surfaces given by certain representations}\label{sec:sing-surface}
We recall singularities of surfaces given by \eqref{eq:W-rep} or \eqref{eq:generalized-cmc}. 
Since types of singular points do not depend on the metric, 
we consider that the target space is the Euclidean $3$-space $\R^3$. 
\subsubsection{The case of a surface given by \eqref{eq:W-rep}}\label{sec:sing-W}
%First, we consider a surface given by \eqref{eq:W-rep}. 
Let $f\colon D\to\R^3$ be a map given by \eqref{eq:W-rep} 
with the holomorphic data $(g,\omega=\hat{\omega}dz)$ on a simply-connected domain $D\subset \C$. 
By a direct calculation, the differentials of $f$ by $z$ and $\overline{z}$ are 
\begin{equation}\label{eq:df}
f_z=\dfrac{1}{2}(-2g,1+g^2,i(1-g^2))\hat{\omega},\quad 
f_{\overline{z}}=\dfrac{1}{2}(-2\overline{g},1+\overline{g}^2,-i(1-\overline{g}^2))\overline{\hat{\omega}}.
\end{equation}
We note that $f_{z}$ is a holomorphic map with respect to $z$. 
Since 
%\begin{equation}\label{eq:diff-z}
$\partial_z=(\partial_u-i\partial_v)/2$ and $\partial_{\overline{z}}=(\partial_u+i\partial_v)/2$, 
%\end{equation}
we have 
\begin{equation}\label{eq:cross}
f_u\times f_v=-2if_z\times f_{\overline{z}}=(|g|^2-1)|\hat{\omega}|^2(1+|g|^2,2\re(g),2\im(g)),
\end{equation}
where $\times$ is the canonical vector product of $\R^3$.
Thus the unit normal vector $\n$ of $f$ can be taken as 
\begin{equation}\label{eq:normal}
\n=\dfrac{1}{\sqrt{(1+|g|^2)^2+4|g|^2}}(1+|g|^2,2\re(g),2\im(g)).
\end{equation}
Moreover, by \eqref{eq:cross} and \eqref{eq:normal}, the signed area density function $\lambda$ of $f$ is 
\begin{equation*}
\lambda=(|g|^2-1)|\hat{\omega}|^2\sqrt{(1+|g|^2)^2+4|g|^2}.
\end{equation*}
Since $|\hat{\omega}|^2\sqrt{(1+|g|^2)^2+4|g|^2}>0$, $S(f)=\{p\in D\ |\ |g(p)|=1\}$ (cf. \cite{maxface}) 
and the singularity identifier $\hat{\lambda}$ of $f$ is 
\begin{equation}\label{eq:identifier}
\hat{\lambda}(z,\overline{z})=g(z)\overline{g(z)}-1.
\end{equation}

\begin{lem}\label{lem:non-deg}
A  singular point $p\in S(f)$ is non-degenerate if and only if $g_z(p)\neq0$.
\end{lem}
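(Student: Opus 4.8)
The plan is to work directly with the explicit formula \eqref{eq:identifier} for the singularity identifier and to convert the non-degeneracy condition $(\hat\lambda_u(p),\hat\lambda_v(p))\neq(0,0)$ into a condition on $g_z(p)$ via the Wirtinger calculus. First I would observe that on a neighbourhood of a singular point $p$ one has $|g|\equiv 1$ along $S(f)$ and in particular $g$ is finite near $p$; since $g$ is meromorphic, it is in fact holomorphic on a neighbourhood of $p$, so $g_{\overline z}=0$ and $\overline{g(z)}$ is anti-holomorphic there.

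Next I would differentiate $\hat\lambda=g\overline{g}-1$ with respect to $z$, obtaining $\hat\lambda_z=g_z\overline{g}$, and since $\hat\lambda$ is real-valued, $\hat\lambda_{\overline z}=\overline{\hat\lambda_z}=g\,\overline{g_z}$. Using $\partial_u=\partial_z+\partial_{\overline z}$ and $\partial_v=i(\partial_z-\partial_{\overline z})$, the pair $(\hat\lambda_u(p),\hat\lambda_v(p))$ vanishes if and only if both $\hat\lambda_z(p)$ and $\hat\lambda_{\overline z}(p)$ vanish, which, as these are complex conjugates, is equivalent to $\hat\lambda_z(p)=0$. Finally, at $p\in S(f)$ we have $|g(p)|=1$, hence $\overline{g(p)}\neq 0$, so $\hat\lambda_z(p)=g_z(p)\overline{g(p)}=0$ holds precisely when $g_z(p)=0$. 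Combining these equivalences yields that $p$ is non-degenerate if and only if $g_z(p)\neq 0$.

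There is essentially no serious obstacle here; the computation is short and routine. The only point deserving a moment of care is the remark that $g$ is holomorphic, not merely meromorphic, in a neighbourhood of a singular point, so that $g_{\overline z}=0$ and the derivative $\hat\lambda_z=g_z\overline{g}$ is obtained cleanly. With that in hand, the rest is a direct translation between the $(u,v)$- and $(z,\overline z)$-derivatives together with the fact that $\overline{g(p)}\neq 0$ on $S(f)$.
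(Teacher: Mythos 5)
Your proof is correct and is exactly the ``direct calculation'' that the paper leaves implicit: differentiating $\hat\lambda=g\overline g-1$ with the Wirtinger calculus and using $|g(p)|=1$ so that $\overline{g(p)}\neq0$ and $g$ is holomorphic near $p$. Nothing further is needed.
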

\begin{proof}
By a direct calculation, we have the assertion (cf. \cite{maxface}).
\end{proof}
We suppose that any singular point is non-degenerate in the following. 
Then there exists a singular curve $\gamma(t)$ such that $\hat{\lambda}(\gamma(t))=0$. 
%Differentiating this, we see that 
%\begin{align*}
%\dfrac{d}{dt}(\hat{\lambda}(\gamma(t)))&=\hat{\lambda}_z(\gamma(t))\gamma'(t)+\hat{\lambda}_{\overline{z}}(\gamma(t))\overline{\gamma'(t)}\\
%&=g_z\overline{g}\gamma'+g\overline{g}_{\overline{z}}\overline{\gamma'}
%=2\re\left(\dfrac{g_z}{g}\gamma'\right)=0,
%\end{align*}
%where $'=d/dt$. 
%This implies that $\gamma'$ is perpendicular to $\overline{(g_z/g)}$. 
It is known that the vector fields $\xi$ which is tangent to $\gamma$ can be taken as %Thus we may take 
\begin{equation}\label{eq:xi}
\xi=ig\overline{g}_{\overline{z}}=ig\overline{g}_{\overline{z}}\partial_z-ig_z\overline{g}\partial_{\overline{z}}\quad 
(\xi_{\gamma}=i\overline{(g_z/g)}\partial_z-i(g_z/g)\partial_{\overline{z}})
\end{equation}
near $p$ (see \cite{fsuy,maxface}). 
Here we used the following identification:
\begin{equation}\label{eq:identify}
\zeta=a+ib\in\C\leftrightarrow (a,b)\in\R^2\leftrightarrow a\partial_u+b\partial_v\leftrightarrow \zeta\partial_z+\overline{\zeta}\partial_{\overline{z}}.
\end{equation}
We sometimes use the following relation:
\begin{equation}\label{eq:g/gz}
\overline{\left(\dfrac{g_z}{g}\right)}=\dfrac{g}{g_z}\left|\dfrac{g_z}{g}\right|^2
\end{equation}
near $p$. 
Moreover, the null vector field $\eta$ is taken as 
%Setting $\eta=\ell\partial_z+\overline{\ell}\partial_{\overline{z}}$, we have 
%\begin{align*}
%\eta f&=\ell f_z+\overline{\ell} f_{\overline{z}}\\
%&=\dfrac{\ell}{2}\left(-2,\dfrac{1}{g}+g,i\left(\dfrac{1}{g}-g\right)\right)g\hat{\omega}
%+\dfrac{\overline{\ell}}{2}\left(-2,\dfrac{1}{\overline{g}}+\overline{g},-i\left(\dfrac{1}{\overline{g}}-\overline{g}\right)\right)\overline{g}\overline{\hat{\omega}}\\
%&=\dfrac{\ell}{2}\left(-2,\overline{g}+g,i\left(\overline{g}-g\right)\right)g\hat{\omega}
%+\dfrac{\overline{\ell}}{2}\left(-2,g+\overline{g},-i\left(g-\overline{g}\right)\right)\overline{g}\overline{\hat{\omega}}\\
%&=\left(-1,\re(g),\im(g)\right)(\ell g\hat{\omega}+\overline{\ell}\overline{g}\overline{\hat{\omega}})
%\end{align*}
%at a singular point. 
%Thus one can take $\eta$ as 
\begin{equation}\label{eq:eta}
\eta=\dfrac{i}{g\hat{\omega}}\partial_z-\dfrac{i}{\overline{g}\overline{\hat{\omega}}}\partial_{\overline{z}}
\end{equation}
(see \cite{fsuy,maxface}). 

\begin{lem}[cf. {\cite[Theorem 3.1]{maxface}}]\label{lem:first-kind}
A non-degenerate singular point $p$ of a surface $f\colon D\to\R^3$ constructed by \eqref{eq:W-rep} is of the first kind 
if and only if $\im(g_z/g^2\hat{\omega})\neq0$ at $p$.
\end{lem}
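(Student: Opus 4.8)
The plan is to translate the defining condition for a singular point of the first kind --- that the singular direction $\xi$ and the null direction $\eta$ are linearly independent at $p$ --- into the non-vanishing of a single complex-analytic expression, using the formulae \eqref{eq:xi} and \eqref{eq:eta} together with the identification \eqref{eq:identify}.

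First I would record the elementary fact that, under \eqref{eq:identify}, two planar vector fields corresponding to complex functions $\zeta_1$ and $\zeta_2$ satisfy $\det(\zeta_1,\zeta_2)=\im(\overline{\zeta_1}\,\zeta_2)$, the determinant being computed in the coordinate frame $\partial_u,\partial_v$. Since $p$ is non-degenerate, Lemma \ref{lem:non-deg} gives $g_z(p)\neq0$, and $g(p)\neq0$ because $|g(p)|=1$; hence $\xi(p)\neq\0$, while $\eta(p)\neq\0$ automatically. Therefore linear independence of $\xi$ and $\eta$ at $p$ is equivalent to $\det(\xi,\eta)(p)\neq0$, and it suffices to evaluate this determinant along the singular curve.

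Next I would substitute and simplify. By \eqref{eq:xi}, the field $\xi$ is identified with $ig\,\overline{g_z}$ (using $\overline{g}_{\overline z}=\overline{g_z}$), and by \eqref{eq:eta} the field $\eta$ is identified with $i/(g\hat{\omega})$; note that the two expressions in \eqref{eq:xi} differ only by the positive factor $|g|^{2}$, which equals $1$ on $S(f)$, so either may be used. Then
\begin{equation*}
\det(\xi,\eta)=\im\!\left(\overline{ig\,\overline{g_z}}\cdot\frac{i}{g\hat{\omega}}\right)=\im\!\left(\frac{\overline{g}\,g_z}{g\,\hat{\omega}}\right).
\end{equation*}
On $S(f)$ one has $|g|=1$, hence $\overline{g}=1/g$ and $\overline{g}/g=1/g^{2}$; inserting this gives $\det(\xi,\eta)=\im\bigl(g_z/(g^{2}\hat{\omega})\bigr)$ along $\gamma$. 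Combined with the previous paragraph, this yields the assertion: $p$ is of the first kind if and only if $\im(g_z/g^{2}\hat{\omega})\neq0$ at $p$.

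The computation is short, and I expect no genuine obstacle. The only points requiring care are the bookkeeping in the complex identification \eqref{eq:identify} (in particular that \eqref{eq:xi} provides two mutually proportional forms of $\xi$ that agree on $S(f)$, so either is admissible in the determinant), and the verification that $\xi(p)$ and $\eta(p)$ are both nonzero, so that the first-kind condition is genuinely the non-vanishing of the $2\times2$ determinant rather than a subtler degeneration.
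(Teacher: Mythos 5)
Your proposal is correct and follows essentially the same route as the paper: identify $\xi$ and $\eta$ with complex numbers via \eqref{eq:identify}, compute $\det(\xi,\eta)=\im(\overline{\xi}\eta)$, and simplify using $\overline{g}=1/g$ on $S(f)$ to obtain $\im(g_z/g^2\hat{\omega})$. The extra checks you flag (the two forms of $\xi$ agreeing on $S(f)$ up to the factor $|g|^2=1$, and the non-vanishing of $\xi(p)$, $\eta(p)$) are correct but not needed beyond what the determinant criterion already gives.
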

\begin{proof}
By the identification \eqref{eq:identify}, we identify $\xi$ and $\eta$ with 
$\xi=i\overline{(g_z/g)}$ and $\eta=i/g\hat{\omega}$, respectively. 
Then 
\begin{equation}\label{eq:xi-eta}
\det(\xi,\eta)(p)=\im(\overline{\xi}\eta)(p)=\im\left(\dfrac{g_z}{g^2\hat{\omega}}\right)(p)
\end{equation}
since $\overline{g}=1/g$ at $p$. 
Thus we have the conclusion.
\end{proof}

For surfaces given by \eqref{eq:W-rep} with the holomorphic data $(g,\omega=\hat{\omega}dz)$, 
the following criteria for singularities of them are known.

\begin{fact}[\cite{fsuy,ot,maxface}]\label{fact:crit-sing-max}
Let $f$ be a surface given by \eqref{eq:W-rep} with the holomorphic data $(g,\hat{\omega}dz)$. 
Let $p$ be a non-degenerate singular point of $f$. 
Then the following assertions hold.
\begin{enumerate}
\item\label{max-front} $f$ is a front at $p$ if and only if $\re(g_z/g^2\hat{\omega})\neq0$ at $p$. 
\item\label{max-ce} $f$ at $p$ is a cuspidal edge if and only if $\re(g_z/g^2\hat{\omega})\neq0$ and $\im(g_z/g^2\hat{\omega})\neq0$ at $p$. 
\item\label{max-sw} $f$ at $p$ is a swallowtail if and only if $\im(g_z/g^2\hat{\omega})=0$, $\re((g_z/g^2\hat{\omega}))\neq0$ and 
$$\re\left(\dfrac{g}{g_z}\left(\dfrac{g_z}{g^2\hat{\omega}}\right)_z\right)\neq0$$
at $p$.
\item\label{max-cbf} $f$ at $p$ is a cuspidal butterfly if and only if $\im(g_z/g^2\hat{\omega})=0$, $\re((g_z/g^2\hat{\omega}))\neq0$, 
$$\re\left(\dfrac{g}{g_z}\left(\dfrac{g_z}{g^2\hat{\omega}}\right)_z\right)=0\quad \textit{and} \quad
\im\left(\dfrac{g}{g_z}\left(\dfrac{g}{g_z}\left(\dfrac{g_z}{g^2\hat{\omega}}\right)_z\right)_z\right)\neq0
$$
at $p$. 
\item\label{max-ccr} $f$ at $p$ is a cuspidal cross cap if and only if $\im(g_z/g^2\hat{\omega})\neq0$, $\re((g_z/g^2\hat{\omega}))=0$ and 
$$\im\left(\dfrac{g}{g_z}\left(\dfrac{g_z}{g^2\hat{\omega}}\right)_z\right)\neq0$$
at $p$.
\item\label{max-cs1} $f$ at $p$ is a cuspidal $S_1^{-}$ singularity if and only if 
$\im(g_z/g^2\hat{\omega})\neq0$, $\re((g_z/g^2\hat{\omega}))=0$, 
$$\im\left(\dfrac{g}{g_z}\left(\dfrac{g_z}{g^2\hat{\omega}}\right)_z\right)=0\quad \textit{and} \quad
\re\left(\dfrac{g}{g_z}\left(\dfrac{g}{g_z}\left(\dfrac{g_z}{g^2\hat{\omega}}\right)_z\right)_z\right)\neq0
$$
at $p$. 
Moreover, there are no surfaces given by \eqref{eq:W-rep} with cuspidal $S_1^{+}$ singularity. 
\end{enumerate}
\end{fact}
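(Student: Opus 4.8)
The plan is to specialize the known singularity criteria to the explicit data
\eqref{eq:df}, \eqref{eq:normal}, \eqref{eq:identifier} and \eqref{eq:eta}.
Recall that for a \emph{front} with non-degenerate singular point $p$ and null vector field $\eta$,
one has: $p$ is a cuspidal edge if and only if $\eta\hat{\lambda}(p)\neq0$;
a swallowtail if and only if $\eta\hat{\lambda}(p)=0$ and $\eta\eta\hat{\lambda}(p)\neq0$;
and a cuspidal butterfly if and only if $\eta\hat{\lambda}(p)=\eta\eta\hat{\lambda}(p)=0$ and $\eta\eta\eta\hat{\lambda}(p)\neq0$
(\cite{krsuy,Aksingul}).
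For a frontal that need not be a front, the cuspidal cross cap and cuspidal $S_k^\pm$ criteria of \cite{fsuy,saji,hks}
supplement the first-kind condition with the (non-)vanishing of an explicit function detecting the failure of $(f,\n)$
to be an immersion, together with higher-order conditions on its $\eta$-derivatives.
Since rescaling $\lambda$ by a positive function, or $\xi,\eta$ by nonvanishing functions, alters these conditions
only by nonvanishing factors, it suffices to evaluate them for the representatives fixed in Section \ref{sec:sing-W}.

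Set $\rho:=g_z/(g^2\hat{\omega})$; since $\hat{\omega}$ never vanishes (Definition \ref{def:hol-data}) and $|g|=1$ on $S(f)$,
$\rho$ is holomorphic near $p$, and $g_z(p)\neq0$ by non-degeneracy (Lemma \ref{lem:non-deg}).
A direct computation from \eqref{eq:identifier} and \eqref{eq:eta}, using that $g$ is holomorphic near $S(f)$,
gives $\eta\hat{\lambda}=-2\im\rho$ along $\gamma$, in agreement with Lemma \ref{lem:first-kind}
(as $\eta\hat{\lambda}(p)\neq0$ characterizes singular points of the first kind).
Next I would compute the ``front function'': differentiating \eqref{eq:normal}, contracting with $\eta$,
and substituting $\overline{g}=1/g$ and $\overline{g}_{\overline z}=\overline{g_z}$ on $S(f)$,
one finds that $(f,\n)$ is an immersion at $p$ exactly when $\re\rho(p)\neq0$,
with this function equal to a nonvanishing multiple of $\re\rho$; this is assertion (1),
and (2) follows since a cuspidal edge is precisely a first-kind front, i.e.\ $\re\rho(p)\neq0$ and $\im\rho(p)\neq0$.
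For the cuspidal cross cap (5), the criterion of \cite{fsuy} becomes: first kind ($\im\rho(p)\neq0$), not a front ($\re\rho(p)=0$),
and a residual non-degeneracy which, after substitution, reads $\im\!\left((g/g_z)\,\rho_z\right)(p)\neq0$;
the operator $(g/g_z)\partial_z$ appears naturally because, via \eqref{eq:identify} and \eqref{eq:g/gz},
the $\partial_z$-part of $\xi$ along $\gamma$ is a positive real multiple of $i(g/g_z)\partial_z$.

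For the second-kind cases (3) and (4) --- both of which are front singularities --- I would, under the standing hypothesis
$\im\rho(p)=0$, compute $\eta\eta\hat{\lambda}$ and $\eta\eta\eta\hat{\lambda}$ along $\gamma$ from \eqref{eq:identifier}
and \eqref{eq:eta}, using holomorphy of $g_z$ and $\hat{\omega}$ and substituting $\overline{g}=1/g$ only after differentiating.
One finds that $\eta\eta\hat{\lambda}(p)$ is a nonvanishing multiple of $\re\!\left((g/g_z)\,\rho_z\right)(p)$,
which together with the front condition (1) gives the swallowtail criterion (3);
when this vanishes, $\eta\eta\eta\hat{\lambda}(p)$ is a nonvanishing multiple of
$\im\!\left((g/g_z)\big((g/g_z)\,\rho_z\big)_z\right)(p)$, giving the cuspidal butterfly criterion (4).
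The cuspidal $S_1^-$ criterion (6) is the analogue of (4) along the non-front branch:
under $\im\rho(p)\neq0$, $\re\rho(p)=0$ and $\im\!\left((g/g_z)\,\rho_z\right)(p)=0$,
the next-order term in the $S_k$ normal-form reduction of \cite{saji,hks} reduces to
$\re\!\left((g/g_z)\big((g/g_z)\,\rho_z\big)_z\right)(p)\neq0$, and the sign separating $S_1^+$ from $S_1^-$
is read off from an explicitly computable real quantity; showing that this quantity cannot realize the $+$ case
is the content of the final sentence, by the same mechanism as Theorem \ref{prop:no-cSk}.

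The main difficulty is the bookkeeping. First, one must carefully pass between the abstract criteria
--- stated with unnormalized $\xi,\eta$ and with $\lambda$ rather than $\hat{\lambda}$ --- and the complex formalism,
tracking the factors $i$ and $1/2$ in \eqref{eq:df} and checking at each stage that only nonvanishing factors are discarded.
Second, the higher-order conditions for the swallowtail, cuspidal butterfly and cuspidal $S_1^-$ involve second and third
derivatives, and several a priori distinct expressions collapse to a multiple of $(g/g_z)\big((g/g_z)\,\rho_z\big)_z$
only after invoking holomorphy of $g_z$ and $\hat{\omega}$ and substituting $\overline{g}=1/g$,
$\overline{g}_{\overline z}=\overline{g_z}$ \emph{at the end}; keeping this order of operations straight,
and pinning down the sign that forces $S_1^-$ rather than $S_1^+$, is the most delicate part.
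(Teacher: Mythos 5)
The statement you are proving is presented in the paper as a \emph{Fact} with no proof at all: it is imported verbatim from \cite{fsuy,ot,maxface}, so there is no internal argument to compare against. Your sketch reconstructs exactly the route those references take: reduce to the general criteria (cuspidal edge/swallowtail via $\eta\hat\lambda$ and $\eta\eta\hat\lambda$ from \cite{krsuy,Aksingul}, cuspidal butterfly via \cite{is}, cuspidal cross cap via \cite{fsuy}, cuspidal $S_k^\pm$ via \cite{saji}), then evaluate the relevant functions on the explicit data. The individual identifications you assert are consistent with what the paper itself computes elsewhere: $\eta\hat\lambda=-2\im(g_z/g^2\hat\omega)$ is \eqref{eq:dir-lambda}; the front function being a nonvanishing multiple of $\re(g_z/g^2\hat\omega)$ follows from contracting $d\n$ with \eqref{eq:eta}; and the appearance of the operator $(g/g_z)\partial_z$ matches the parametrization $d/dt=i(\overline{(g_z/g)}\partial_z-(g_z/g)\partial_{\overline z})$ used in the proofs of Theorems \ref{prop:no-cSk} and \ref{thm:crit-cbf}. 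So the outline is sound and is essentially the literature's own proof.

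Two caveats. First, this is a sketch: every ``one finds that'' hides a genuine computation (especially $\eta\eta\hat\lambda$ and $\eta\eta\eta\hat\lambda$, where the order of operations you correctly insist on --- differentiate first, substitute $\overline g=1/g$ last --- is precisely where errors creep in), so as written it is a plan rather than a proof. Second, and more substantively, the exclusion of cuspidal $S_1^{+}$ is not ``the same mechanism as Theorem \ref{prop:no-cSk}'': that theorem shows the discriminating function $a$ of \cite[Theorem 3.2]{saji} \emph{vanishes} under the $S_{k\ge2}$ hypotheses, whereas the $S_1^{+}$ versus $S_1^{-}$ dichotomy is decided by the \emph{sign} of $a$ when it is nonzero. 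One must actually exhibit $a=32|\hat\omega|^2\im(g_z/g^2\hat\omega)^5\,\re\bigl(\tfrac{g}{g_z}\bigl(\tfrac{g}{g_z}(\tfrac{g_z}{g^2\hat\omega})_z\bigr)_z\bigr)$ and trace through Saji's normalization to see why only the minus sign can occur; this step is genuinely done in \cite{ot} and is not recoverable from the vanishing argument you point to. Filling that in is the one real gap in your outline.
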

We note that types of singularities of maxfaces in $\R^3_1$ are characterized by this fact. 
Furthermore, we have a stronger result than the last statement of \ref{max-cs1} in Fact \ref{fact:crit-sing-max}.
\begin{thm}\label{prop:no-cSk}
For $k\geq2$, there are no surfaces given by \eqref{eq:W-rep} with cuspidal $S_k^{\pm}$ singularities.
\end{thm}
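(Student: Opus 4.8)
The plan is to combine the criterion for cuspidal $S_k^\pm$ singularities of frontals (in the spirit of Fact~\ref{fact:crit-sing-max}; cf.\ \cite{Aksingul}) with the holomorphic structure behind \eqref{eq:W-rep}. Continuing the quantities appearing in Fact~\ref{fact:crit-sing-max}, set $\Psi_0:=g_z/(g^2\hat\omega)$ and, recursively, $\Psi_{j+1}:=(g/g_z)(\Psi_j)_z$. At a non-degenerate singular point $p$ one has $g(p)\neq0$ and $g_z(p)\neq0$ by Lemma~\ref{lem:non-deg}, and $\hat\omega(p)\neq0$ by Definition~\ref{def:hol-data}\,(2), so each $\Psi_j$ is holomorphic near $p$. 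By Lemma~\ref{lem:first-kind} and part \ref{max-front} of Fact~\ref{fact:crit-sing-max}, a cuspidal $S_k^\pm$ point of $f$ satisfies $\im\Psi_0(p)\neq0$ (it is of the first kind) and $\re\Psi_0(p)=0$ (it is not a front). The remaining conditions for a cuspidal $S_k^\pm$ will, in analogy with parts \ref{max-ccr} and \ref{max-cs1} of Fact~\ref{fact:crit-sing-max}, continue the alternating chain $\im\Psi_1(p)=0$, $\re\Psi_2(p)=0$, $\im\Psi_3(p)=0,\dots$ down through index $k$, together with a non-vanishing (sign-determining) condition at index $k+1$ and one further non-degeneracy condition of ``non-cuspidal-cross-cap'' type.

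First I would pass to the local holomorphic coordinate $w=\log g$, legitimate near $p$ since $g_z(p)\neq0$; then $g/g_z=dz/dw$, so $\Psi_{j+1}=\partial_w\Psi_j$ and hence $\Psi_j=\partial_w^j\Psi_0$, while holomorphy of the $1$-form $g^2\omega$ (Definition~\ref{def:hol-data}\,(1)) gives $\Psi_0=e^{w}/B(w)$ for a holomorphic, nowhere-vanishing $B$. In this coordinate the singular set $\{|g|=1\}$ is the line $\{\re w=0\}$, and the restriction of $\re\Psi_0$ to it is the real-analytic function along the singular curve whose order of vanishing at $p$ governs the type: order $1$ at a cuspidal cross cap, order $k+1$ at a cuspidal $S_k$. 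Since $\Psi_n=\partial_w^n\Psi_0$, differentiating along $\{\re w=0\}$ turns the vanishing of this function and of its first $k$ derivatives at $p$ precisely into the chain $\re\Psi_0(p)=\im\Psi_1(p)=\re\Psi_2(p)=\cdots=0$ up through index $k$; in particular, for $k\geq2$ it forces $\re\Psi_2(p)=0$.

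The crux is to show that, on the locus where $\re\Psi_0(p)=\im\Psi_1(p)=0$, the ``non-cuspidal-cross-cap'' invariant $\Theta(p)$ that must be non-zero for a genuine cuspidal $S_k^\pm$ (the one detecting the $\pm v^2$ summand in the normal form $v^3(u^{k+1}\pm v^2)$) satisfies $\Theta(p)=c\,\re\Psi_2(p)$ with $c\neq0$ --- the sign of $c$ being exactly what forces part \ref{max-cs1} of Fact~\ref{fact:crit-sing-max} to yield a cuspidal $S_1^-$ and never an $S_1^+$. Granting this, for $k\geq2$ one obtains simultaneously $\re\Psi_2(p)=0$ (from the chain) and $\Theta(p)=c\,\re\Psi_2(p)\neq0$ (from being a cuspidal $S_k^\pm$), a contradiction, which proves the theorem. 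The main obstacle is precisely this identity: one must pin down the invariant $\Theta$ and compute it from the $5$-jet of $f$ at $p$ via \eqref{eq:df}, \eqref{eq:normal}, \eqref{eq:xi} and \eqref{eq:eta}. I expect this to be a long but routine computation, best organised by working throughout in the coordinate $w=\log g$ (so that all relevant holomorphic data are derivatives of $e^{w}/B(w)$) and by using harmonicity of $f$ in the underlying conformal coordinate together with the explicit change of frame from $(\partial_u,\partial_v)$ to the fields $\xi$ and $\eta$.
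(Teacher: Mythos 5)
Your proposal is correct and follows essentially the same route as the paper's proof: both reduce to Saji's criterion for cuspidal $S_k$ singularities, observe that for $k\geq2$ the chain of vanishing conditions forces $\re\Psi_2(p)=0$ (in your notation), and reach a contradiction because the non-degeneracy invariant in condition (b) of Saji's theorem is a nonzero multiple of $\re\Psi_2(p)$. The identity you defer to a ``long but routine computation'' is precisely the formula $a=32|\hat{\omega}|^2\im(g_z/g^2\hat{\omega})^5\,\re\Psi_2$ that the paper quotes from \cite[p.~125]{ot}, so that step is already available in the literature and your argument closes without further work.
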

\begin{proof}
Let $f\colon D\to\R^3$ be a surface given by \eqref{eq:W-rep} with the holomorphic data $(g,\omega=\hat{\omega}dz)$. 
Let $p$ be a singular point of the first kind of $f$ and $\gamma(t)$ $(t\in(-\eps,\eps))$ a singular curve through $p(=\gamma(0))$. 
Then we set a function $\psi\colon(-\eps,\eps)\to\R$ by 
$$\psi(t)=\det(\hat{\gamma}'(t),\n\circ\gamma(t),d\n_{\gamma(t)}(\eta(t))),$$
where $\hat{\gamma}=f\circ\gamma$, $\n$ is the Euclidean Gauss map of $f$ as in \eqref{eq:normal} and $\eta$ is a null vector field as in \eqref{eq:eta}. 
By a direct calculation, we see that 
$$\psi=-|\hat{\omega}|^2\im\left(\dfrac{g_z}{g^2\hat{\omega}}\right)\re\left(\dfrac{g_z}{g^2\hat{\omega}}\right)$$
holds at $p$. 
We assume that $f$ is not a front at $p$, that is, $\re(g_z/g^2\hat{\omega})(p)=0$ (see Fact \ref{fact:crit-sing-max}). 
If $f$ has a cuspidal $S_k$ singularity $(k\geq 2)$, then $\psi=\psi'=\psi''=\cdots=\psi^{(k)}=0$ and $\psi^{(k+1)}\neq0$ at $p$ 
(see \cite[Theorem 3.2]{saji}), 
where we take a parameter $t$ satisfying $d/dt=i(\overline{(g_z/g)}\partial_z-(g_z/g)\partial_{\overline{z}})$ (cf. \cite{fsuy,maxface}). 
In particular, $\psi=\psi'=\psi''=0$ at $p$ is equivalent to 
\begin{equation}\label{eq:condition-psi}
\re\left(\dfrac{g_z}{g^2\hat{\omega}}\right)=\im\left(\dfrac{g}{g_z}\left(\dfrac{g_z}{g^2\hat{\omega}}\right)_z\right)=
\re\left(\dfrac{g}{g_z}\left(\dfrac{g}{g_z}\left(\dfrac{g_z}{g^2\hat{\omega}}\right)_z\right)_z\right)=0
\end{equation}
at $p$ by using the relation \eqref{eq:g/gz} (see \cite[Page 124]{ot}). 

On the other hand, the function $a$ which is defined in condition (b) of \cite[Theorem 3.2]{saji} can be written as 
$$a=32|\hat{\omega}|^2\im\left(\dfrac{g_z}{g^2\hat{\omega}}\right)^5
\re\left(\dfrac{g}{g_z}\left(\dfrac{g}{g_z}\left(\dfrac{g_z}{g^2\hat{\omega}}\right)_z\right)_z\right)$$
(see \cite[Page 125]{ot}). 
Thus if $f$ has a cuspidal $S_{k\geq2}$ singularity at $p$, $a$ vanishes at $p$ by \eqref{eq:condition-psi}. 
This implies that $f$ cannot have a cuspidal $S_{k\geq2}$ singularity by \cite[Theorem 3.2]{saji}. 
\end{proof}
By this theorem, one can see that there are no maxfaces with cuspidal $S_{k\geq2}$ singularities.

\subsubsection{The case of a surface given by \eqref{eq:generalized-cmc}}\label{sec:sing-cmc}
We review some notions of surfaces given by \eqref{eq:generalized-cmc}. 
Let $f\colon D\to\R^3$ be a surface given by \eqref{eq:generalized-cmc} 
with an extended harmonic map $g$, 
where $D$ is a simply-connected domain in the complex plane $\C$ with complex coordinate $z=u+iv$. 
%Then $f$ is a generalized spacelike constant mean curvature surface in $\R^3_1$ (with certain singularities). 
We assume that $|g|$ takes finite values on $D$ to focus on non-degenerate singular points. 

By \eqref{eq:generalized-cmc}, the first order differentials of $f$ by $z$ and $\overline{z}$ are
\begin{equation}\label{eq:diff-cmc}
f_z=\dfrac{1}{H}(-2g,1+g^2,i(1-g^2))\hat{\omega},\quad
f_{\overline{z}}=\dfrac{1}{H}(-2\overline{g},1+\overline{g}^2,-i(1-\overline{g}^2))\overline{\hat{\omega}}.
\end{equation} 
Thus we have 
\begin{equation*}
f_u\times f_v=-2if_z\times f_{\overline{z}}=\dfrac{(|g|^2-1)|\hat{\omega}|^2}{H^2}(1+|g|^2,2\re(g),2\im(g)),
\end{equation*}
and hence 
the unit normal vector $\n$ of $f$ can be taken as 
\begin{equation}\label{eq:normal-cmc}
\n=\dfrac{1}{\sqrt{(1+|g|^2)^2+4|g|^2}}(1+|g|^2,2\re(g),2\im(g)).
\end{equation}

Using $f_z$, $f_{\overline{z}}$ and $\n$, the signed area density function of $f$ is 
\begin{equation}\label{eq:lambda-cmc}
\lambda=(|g|^2-1)|\hat{\omega}|^2\dfrac{\sqrt{(1+|g|^2)^2+4|g|^2}}{H^2}.
\end{equation}
Since $\sqrt{(1+|g|^2)^2+4|g|^2}/H^2>0$, the set of singular points $S(f)$ of $f$ is the union $S(f)=S_1(f)\cup S_2(f)$, 
where 
$$S_1(f)=\{p\in D\ |\ |g(p)|-1=0\},\quad S_2(f)=\{p\in D\ | \ |\hat{\omega}(p)|=0\}.$$
By assumption, $\hat{\omega}\neq0$ (see Definition \ref{def:generalized-cmc}). 
Thus $S_2(f)=\emptyset$ in such a case. 
Moreover, the singularity identifier $\hat{\lambda}$ is $\hat{\lambda}(z)=g(z)\overline{g(z)}-1$. 
\begin{lem}[{\cite{umeda}}]\label{lem:nondeg-cmc}
Under the above setting, a singular point $p$ of $f$ is non-degenerate 
if and only if $g_z(p)\neq0$.
\end{lem}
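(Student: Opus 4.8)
The plan is to reduce the non-degeneracy of $p$ to the non-vanishing of $\hat\lambda_z(p)$, and then to evaluate $\hat\lambda_z$ along $S(f)$ by means of the defining relation \eqref{eq:cmc-omega} for $\hat\omega$. Since the singularity identifier $\hat\lambda(z)=g(z)\overline{g(z)}-1$ is real-valued, we have $\hat\lambda_z=(\hat\lambda_u-i\hat\lambda_v)/2$, so $(\hat\lambda_u(p),\hat\lambda_v(p))\neq(0,0)$ if and only if $\hat\lambda_z(p)\neq0$; thus it suffices to prove $\hat\lambda_z(p)\neq0\iff g_z(p)\neq0$ for $p\in S(f)$. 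A direct computation gives $\hat\lambda_z=g_z\overline{g}+g\,\overline{g}_z$.

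The key step is to show that $\overline{g}_z$ vanishes on $S(f)$. By \eqref{eq:cmc-omega} we have $\overline{g}_z=(1-|g|^2)^2\hat\omega$ on $D\setminus S(f)$, and since $\omega=\hat\omega\,dz$ extends to a $C^1$ form across $S(f)=\{p\in D\mid |g(p)|=1\}$ (Definition \ref{def:generalized-cmc}), the coefficient $\hat\omega$ is $C^1$, hence continuous and locally bounded, on all of $D$. Both sides of $\overline{g}_z=(1-|g|^2)^2\hat\omega$ are therefore continuous on $D$ and agree on $D\setminus S(f)$, so the identity holds on all of $D$ by continuity. Evaluating at $p\in S(f)$, where $1-|g(p)|^2=0$, yields $\overline{g}_z(p)=0$. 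Consequently $\hat\lambda_z(p)=g_z(p)\,\overline{g(p)}$, and since $|g(p)|=1$ forces $\overline{g(p)}\neq0$, we conclude $\hat\lambda_z(p)\neq0$ if and only if $g_z(p)\neq0$, which is the assertion.

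I expect the only delicate point to be the continuity argument for the identity $\overline{g}_z=(1-|g|^2)^2\hat\omega$ on all of $D$: if $D\setminus S(f)$ fails to be dense near $p$, i.e.\ $p$ is an interior point of $S(f)$, one should instead invoke the extended harmonic map equation, which on a region where $|g|\equiv1$ reduces to $g_{z\overline z}=0$ and forces $g_z\equiv0$ there, so that $\hat\lambda_z(p)$ and $g_z(p)$ both vanish and the equivalence is again immediate. This step plays, in the present setting, the role played by the holomorphy of $g$ in Lemma \ref{lem:non-deg} (where $\overline{g}_z\equiv0$ is automatic); apart from it, the argument is the same direct computation as in the maxface case.
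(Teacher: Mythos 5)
Your proof is correct. The paper itself gives no argument for this lemma (it is quoted from \cite{umeda}), so there is nothing to compare step by step; but your route is the natural one and is exactly consistent with what the paper relies on elsewhere: the identification of non-degeneracy with $\hat\lambda_z(p)\neq0$ for the real-valued identifier $\hat\lambda=g\overline g-1$, the computation $\hat\lambda_z=g_z\overline g+g\,\overline g_z$, and above all the key fact $\overline g_z=(1-|g|^2)^2\hat\omega=0$ on $S_1(f)$, which the paper itself invokes later (in the proof of Theorem \ref{thm:ks}, ``since $g_{\overline z}=\overline g_z=0$ at $p$''). This cleanly reduces the statement to $\hat\lambda_z(p)=g_z(p)\overline{g(p)}$ with $|\overline{g(p)}|=1$, mirroring the one-line proof of Lemma \ref{lem:non-deg} where holomorphy of $g$ makes $\overline g_z\equiv0$ automatic. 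One small imprecision in your treatment of the (pathological) case where $p$ is interior to $S(f)$: the equation $g_{z\overline z}=0$ alone does not force $g_z\equiv0$ (it only says $g_z$ is holomorphic); what closes the argument is combining it with $|g|\equiv1$, since differentiating $g\overline g=1$ by $z$ and then $\overline z$ and using $g_{z\overline z}=\overline g_{z\overline z}=0$ yields $|g_z|^2+|g_{\overline z}|^2=0$. With that sentence added, the edge case is handled correctly; in any event both $\hat\lambda_z$ and $g_z$ vanish there, so the equivalence survives.
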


Let us assume that a point $p\in S_1(f)$ is a non-degenerate singular point of $f$. 
Then by similar discussions for the case of surfaces given by \eqref{eq:W-rep}, 
we can take vector fields $\xi$ and $\eta$ as 
\begin{equation}\label{eq:cmc-xi}
\xi=i\overline{g}_{\overline{z}}{g}\partial_z-i{g_z}\overline{g}\partial_{\overline{z}},\quad
\eta=\dfrac{i}{g\hat{\omega}}\partial_z-\dfrac{i}{\overline{g}\overline{\hat{\omega}}}\partial_{\overline{z}},
\end{equation}
which are the singular direction along the singular curve $\gamma$ and a null vector field, respectively (cf. \cite{umeda}).
Thus we have the following.
\begin{lem}[{\cite[Theorem 4.1]{umeda}}]\label{lem:first-kind-cmc}
A non-degenerate singular point $p$ of a surface $f$ given by \eqref{eq:generalized-cmc} with an extended harmonic map $g$ 
is of the first kind if and only if $\im(g_z/g^2\hat{\omega})\neq0$ at $p$.
\end{lem}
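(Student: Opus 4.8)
The statement to prove is Lemma \ref{lem:first-kind-cmc}: for a surface $f$ given by \eqref{eq:generalized-cmc} with an extended harmonic map $g$, a non-degenerate singular point $p\in S_1(f)$ is of the first kind if and only if $\im(g_z/g^2\hat{\omega})(p)\neq0$. Since this is cited from \cite[Theorem 4.1]{umeda}, the natural route is to imitate, nearly verbatim, the proof of Lemma \ref{lem:first-kind} for the case of surfaces given by \eqref{eq:W-rep}: the only structural difference between \eqref{eq:W-rep} and \eqref{eq:generalized-cmc} is the overall factor $2/H$ (versus $\re\int$), which rescales $f_z$ by a nonzero constant and therefore does not affect the Gauss map \eqref{eq:normal-cmc}, the singularity identifier $\hat\lambda=g\overline g-1$, or the directions of the vector fields $\xi,\eta$ in \eqref{eq:cmc-xi}. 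So the argument is essentially a transcription with bookkeeping of that constant.

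First I would recall that, by Lemma \ref{lem:nondeg-cmc}, non-degeneracy at $p$ means $g_z(p)\neq0$, so the singular curve $\gamma$ exists and $\xi,\eta$ in \eqref{eq:cmc-xi} are well-defined near $p$; by definition $p$ is of the first kind exactly when $\xi$ and $\eta$ are linearly independent at $p$. Next, using the identification \eqref{eq:identify} I would write $\xi$ and $\eta$ as the complex numbers $\xi = i\,\overline{g}_{\overline z}\,g$ and $\eta = i/(g\hat\omega)$; along $S_1(f)$ we have $|g|=1$, hence $\overline g = 1/g$ at $p$, and one checks $\overline g_{\overline z}=\overline{g_z}$, so $\xi = i\,\overline{g_z}\,g$ while $\overline\xi = -i\,g_z\,\overline g = -i\,g_z/g$ at $p$. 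Linear independence of two nonzero complex numbers (viewed as real vectors) is measured by the imaginary part of the product of one with the conjugate of the other, so
\[
\det(\xi,\eta)(p)=\im\bigl(\overline{\xi}\,\eta\bigr)(p)
=\im\!\left(\frac{-i\,g_z}{g}\cdot\frac{i}{g\hat\omega}\right)(p)
=\im\!\left(\frac{g_z}{g^{2}\hat\omega}\right)(p),
\]
and since $\xi(p),\eta(p)\neq0$ (as $g_z(p)\neq0$, $g(p)\neq0$, $\hat\omega(p)\neq0$) this determinant is nonzero iff the two vectors are linearly independent. That gives the claimed equivalence.

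The only genuinely checkable point — and the step I expect to need the most care — is the reduction of $\xi$ from the field $i\overline g_{\overline z} g\,\partial_z - i g_z\overline g\,\partial_{\overline z}$ to the number $i\overline{(g_z/g)}$ at $p$: one must verify that $\xi$ as given in \eqref{eq:cmc-xi} is indeed tangent to $\gamma$ (equivalently, annihilates $d\hat\lambda$, since $\xi\hat\lambda = \xi(g\overline g-1)$ and a short computation using $|g|=1$ shows this vanishes at $p$), and that along $\gamma$ the complex representative of $\xi$ agrees, up to a positive real factor, with $i\overline{(g_z/g)}$ via relation \eqref{eq:g/gz}; this is exactly the parenthetical identity in \eqref{eq:xi} carried over to the present setting. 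Since the extra constant $2/H$ in \eqref{eq:generalized-cmc} enters $f_z$ multiplicatively and the $1$-form $\hat\omega=\overline g_z/(1-|g|^2)^2$ here is determined by $g$ via \eqref{eq:cmc-omega}, none of this alters the complex numbers $\xi,\eta$ up to nonzero factors, and the conclusion follows exactly as in Lemma \ref{lem:first-kind}. I would therefore present the proof in two or three lines: invoke \eqref{eq:cmc-xi} and the identification \eqref{eq:identify}, record $\overline g=1/g$ at $p$, compute $\det(\xi,\eta)(p)=\im(g_z/g^2\hat\omega)(p)$, and conclude.
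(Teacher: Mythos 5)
Your proposal is correct and follows essentially the same route the paper intends: the paper derives this lemma by transporting the vector fields $\xi,\eta$ of \eqref{eq:cmc-xi} into the CMC setting and repeating the computation of Lemma \ref{lem:first-kind}, namely $\det(\xi,\eta)(p)=\im(\overline{\xi}\eta)(p)=\im\left(g_z/g^2\hat{\omega}\right)(p)$ using $\overline{g}=1/g$ on $S_1(f)$. Your bookkeeping of the constant $2/H$ and the identification $\xi=i\,\overline{g_z}\,g=i\,\overline{(g_z/g)}$ at $p$ is accurate, so nothing is missing.
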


The following characterizations of singularities are known. 
\begin{fact}[{\cite[Theorem 4.1]{umeda}}]\label{fact:crit-cmc}
Let $f$ be a surface given by \eqref{eq:generalized-cmc} with an extended harmonic map $g$. 
Let $p$ be a non-degenerate singular point of $f$. 
Then the following assertions hold.
\begin{enumerate}
\item $f$ at $p$ is a front if and only if $\re(g_z/g^2\hat{\omega})\neq0$ at $p$. 
\item $f$ at $p$ is a cuspidal edge if and only if $\re(g_z/g^2\hat{\omega})\neq0$ and $\im(g_z/g^2\hat{\omega})\neq0$ at $p$.
\item $f$ at $p$ is a swallowtail if and only if $\re(g_z/g^2\hat{\omega})\neq0$, $\im(g_z/g^2\hat{\omega})=0$ and 
$$\re\left(\dfrac{g}{g_z}\left(\dfrac{g_z}{g^2\hat{\omega}}\right)_z\right)\neq
\re\left(\overline{\left(\dfrac{g}{g_z}\right)}\left(\dfrac{g_z}{g^2\hat{\omega}}\right)_{\overline{z}}\right)$$
at $p$. 
\item $f$ at $p$ is a cuspidal cross cap if and only if $\re(g_z/g^2\hat{\omega})=0$, $\im(g_z/g^2\hat{\omega})\neq0$ and 
$$\im\left(\dfrac{g}{g_z}\left(\dfrac{g_z}{g^2\hat{\omega}}\right)_z\right)\neq
\im\left(\overline{\left(\dfrac{g}{g_z}\right)}\left(\dfrac{g_z}{g^2\hat{\omega}}\right)_{\overline{z}}\right)$$
at $p$. 
\end{enumerate}
\end{fact}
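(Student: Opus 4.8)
The plan is to derive each of the four assertions from the corresponding general criterion for the relevant singularity---expressed through the singularity identifier $\hat\lambda$, the null vector field $\eta$, the singular direction $\xi$ and the unit normal $\n$ (cf.\ \cite{krsuy,saji}, together with the criterion already used via the function $\psi$ in the proof of Theorem \ref{prop:no-cSk})---by inserting the explicit data \eqref{eq:normal-cmc}, \eqref{eq:lambda-cmc}, \eqref{eq:cmc-xi} of a surface $f$ given by \eqref{eq:generalized-cmc}. The computations run in parallel with those behind Fact \ref{fact:crit-sing-max} for surfaces given by \eqref{eq:W-rep}; the only structural difference is that here $\hat\omega$ is merely of class $C^1$ instead of holomorphic, so that $(g_z/g^2\hat\omega)_{\overline{z}}$ need not vanish and yields the extra terms in (3) and (4). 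Throughout one repeatedly uses $\hat\lambda(p)=|g(p)|^2-1=0$, the relation \eqref{eq:cmc-omega} (which gives $\overline{g}_z=(1-|g|^2)^2\hat\omega$, hence $\overline{g}_z(p)=g_{\overline{z}}(p)=g_{z\overline{z}}(p)=0$), the extended harmonic map equation in Definition \ref{def:generalized-cmc}, and $\hat\omega(p)\neq0$.

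For assertion (1), recall that a frontal is a front at a non-degenerate singular point $p$ if and only if $d\n_p(\eta_p)\neq\0$. Differentiating \eqref{eq:normal-cmc} and using the identification \eqref{eq:identify}, one checks that $d\n_p(\eta_p)$ is a nonzero scalar multiple of $\re(g_z/g^2\hat\omega)(p)$, the vanishing of $g_{\overline{z}}$ at $p$ cancelling every remaining term. Combining this with Lemma \ref{lem:first-kind-cmc} (first kind $\iff\im(g_z/g^2\hat\omega)(p)\neq0$) and with the fact that a non-degenerate singular point of the first kind of a front is a cuspidal edge gives (2).

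For assertion (3), write $\lambda=\mu\hat\lambda$ with $\mu=|\hat\omega|^2\sqrt{(1+|g|^2)^2+4|g|^2}/H^2>0$ and $\hat\lambda=g\overline{g}-1$. Assuming $f$ is a front at $p$ (that is, $\re(g_z/g^2\hat\omega)(p)\neq0$), the swallowtail criterion for fronts (cf.\ \cite{krsuy,saji}) states that $f$ has a swallowtail at $p$ exactly when $\eta\hat\lambda(p)=0$ and $\eta\eta\hat\lambda(p)\neq0$. A short computation using $\overline{g}_z(p)=g_{\overline{z}}(p)=0$ gives $\eta\hat\lambda(p)=-2\im(g_z/g^2\hat\omega)(p)$, so the first condition is the second-kind condition $\im(g_z/g^2\hat\omega)(p)=0$. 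Differentiating once more---now one must keep the $\partial_{\overline{z}}$-derivatives and feed in \eqref{eq:cmc-omega} and the extended harmonic map equation to handle the derivatives of $\overline{g}$ and of $\hat\omega$ along the singular curve---one finds that $\eta\eta\hat\lambda(p)$ is a nonzero multiple of
\[
\re\left(\dfrac{g}{g_z}\left(\dfrac{g_z}{g^2\hat\omega}\right)_{z}\right)-\re\left(\overline{\left(\dfrac{g}{g_z}\right)}\left(\dfrac{g_z}{g^2\hat\omega}\right)_{\overline{z}}\right)
\]
at $p$, which is exactly the condition in (3).

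Finally, for assertion (4), $f$ is not a front ($\re(g_z/g^2\hat\omega)(p)=0$) while $p$ is of the first kind ($\im(g_z/g^2\hat\omega)(p)\neq0$), and the cuspidal cross cap criterion demands in addition that the function $\psi(t)=\det(\hat\gamma'(t),\n\circ\gamma(t),d\n_{\gamma(t)}(\eta(t)))$ from the proof of Theorem \ref{prop:no-cSk} vanish at $t=0$ to order exactly one. Since $\psi(p)=-|\hat\omega|^2\im(g_z/g^2\hat\omega)\re(g_z/g^2\hat\omega)$ vanishes automatically here, the claim reduces to showing that $\psi'(p)$ is a nonzero multiple of $\im((g/g_z)(g_z/g^2\hat\omega)_{z})-\im(\overline{(g/g_z)}(g_z/g^2\hat\omega)_{\overline{z}})$ at $p$; differentiating $\psi$ and simplifying with the relations above yields this, hence (4). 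The main obstacle is purely computational: the second-order differentiations required for (3) and (4) are long, and keeping them tractable hinges on systematically exploiting that $\hat\lambda$, $\overline{g}_z$, $g_{\overline{z}}$ and $g_{z\overline{z}}$ all vanish along the singular curve, so that after inserting the extended harmonic map equation only the terms surviving at $p$ remain.
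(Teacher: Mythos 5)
The paper offers no proof of this statement: it is quoted as a Fact imported verbatim from \cite[Theorem 4.1]{umeda}, so there is no in-paper argument to compare yours against. On its own terms your outline is correct and is the natural one; it also runs parallel to computations the paper does carry out elsewhere. Concretely: (1) the reduction of ``front'' to $d\n_p(\eta_p)\neq\0$ and the evaluation $d\n(\eta)=-\tfrac{1}{\sqrt2}\re(g_z/g^2\hat\omega)\,(0,\im g,-\re g)$ at $p$ checks out against \eqref{eq:dn-cmc}; (2) follows from the cuspidal-edge criterion of \cite{krsuy} together with Lemma \ref{lem:first-kind-cmc}; (4) is exactly the cuspidal-cross-cap criterion applied to the function $\psi$ of Theorem \ref{prop:no-cSk}, and differentiating $\psi=-(\text{positive})\cdot\im(\phi)\re(\phi)$ along the curve with $\re\phi(p)=0$ does produce the stated $\im$-difference, using \eqref{eq:g/gz}. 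The key structural inputs you isolate --- $\overline{g}_z=g_{\overline{z}}=0$ on $S_1(f)$ from \eqref{eq:cmc-omega} and $g_{z\overline{z}}=0$ there from the harmonicity equation --- are precisely what makes the extra $\phi_{\overline{z}}$-terms appear (and distinguishes this from Fact \ref{fact:crit-sing-max}, where $\phi$ is holomorphic). The one place I would steer you differently is (3): rather than computing $\eta\eta\hat\lambda$ (which forces you to extend $\eta$ off the singular curve and is the messiest route, though equivalent by the $A_3$-criterion of \cite{Aksingul}), use the form of the swallowtail criterion with $\delta(t)=\det(\gamma',\eta)(t)=\im(g_z/g^2\hat\omega)$ and require $\delta'(0)\neq0$; the paper's own proof of Theorem \ref{thm:crit-cbf} computes exactly this derivative and shows $\delta'=|g_z/g|^2\bigl(\re((g/g_z)\phi_z)-\re(\overline{(g/g_z)}\phi_{\overline{z}})\bigr)$, which is the condition in (3) with no further work.
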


This fact characterizes types of singularities on extended CMC surfaces in $\R^3_1$. 
We shall extend this result under some additional assumption. 
\begin{thm}\label{thm:crit-cbf}
Let $f$ be a surface given by \eqref{eq:generalized-cmc} with an extended harmonic map $g$. 
Let $p$ be a non-degenerate singular point of $f$. 
Assume that $\hat{\omega}$ as in \eqref{eq:cmc-omega} can be extended to a function of 
at least class $C^2$ across $S_1(f)=\{p\in D\ |\ |g(p)|=1\}$. 
Then $f$ at $p$ is a cuspidal butterfly if and only if 
$\re(g_z/g^2\hat{\omega})\neq0$, $\im(g_z/g^2\hat{\omega})=0$, 
$$\re\left(\dfrac{g}{g_z}\left(\dfrac{g_z}{g^2\hat{\omega}}\right)_z\right)=
\re\left(\overline{\left(\dfrac{g}{g_z}\right)}\left(\dfrac{g_z}{g^2\hat{\omega}}\right)_{\overline{z}}\right)$$
and 
$$\im\left(\dfrac{g}{g_z}\left(\dfrac{g}{g_z}\left(\dfrac{g_z}{g^2\hat{\omega}}\right)_z\right)_z\right)
+\im\left(\overline{\left(\dfrac{g}{g_z}\right)}\left(\overline{\left(\dfrac{g}{g_z}\right)}\left(\dfrac{g_z}{g^2\hat{\omega}}\right)_{\overline{z}}\right)_{\overline{z}}\right)
\neq\dfrac{1}{|g_z|^2}\im\left(\left(\dfrac{g_z}{g^2\hat{\omega}}\right)_{z\overline{z}}\right)$$
hold at $p$.
\end{thm} 
\begin{proof}
%We identify $\R^3_1$ with $\R^3$. 
Let $\gamma(t)$ be a singular curve passing through $p$. 
Take a parameter $t$ satisfying the relation  $d/dt=i(\overline{(g_z/g)}\partial_z-(g_z/g)\partial_{\overline{z}})$ (cf. \cite{fsuy,umeda,maxface}). 
Then we set a function $\delta$ as 
$$\delta(t)=\det(\gamma',\eta)(t),$$
where $\eta$ is as in \eqref{eq:cmc-xi}. 
In this case, $\delta$ can be written as 
$$\delta=\im\left(\dfrac{g_z}{g^2\hat{\omega}}\right)=\im(\phi)=\dfrac{1}{2i}(\phi-\overline{\phi}),$$
where we set $\phi=g_z/g^2\hat{\omega}$. 
By \cite[Corollary A. 9]{is}, $f$ at a non-degenerate singular point $p$ is a cuspidal butterfly 
if and only if $f$ at $p$ is a front and $\delta(0)=\delta'(0)=0$ but $\delta''(0)\neq0$. 
Thus we calculate the first and the second order derivatives of $\delta$ by $t$. 
By the above expression and the relation, we have 
$$\delta'=\left(\re\left(\dfrac{g}{g_z}\phi_z\right)-\re\left(\overline{\left(\dfrac{g}{g_z}\right)}\phi_{\overline{z}}\right)\right)\left|\dfrac{g_z}{g}\right|^2
=\tilde{\delta}\left|\dfrac{g_z}{g}\right|^2,$$
where we used the relation as in \eqref{eq:g/gz}. 

We now suppose that $\delta'(0)=0$. 
Then $\delta''(0)\neq0$ is equivalent to $\tilde{\delta}'(0)\neq0$. 
Hence we calculate $\tilde{\delta}'$. 
By a direct computation, we see that 
\begin{align*}
\tilde{\delta}'&=
\dfrac{i}{2}\dfrac{g}{g_z}\left(\left(\dfrac{g}{g_z}\phi_z\right)_z+\overline{\left(\dfrac{g}{g_z}\right)}\overline{\phi}_{z\overline{z}}
-\overline{\left(\dfrac{g}{g_z}\right)}\phi_{z\overline{z}}-\left(\dfrac{g}{g_z}\overline{\phi}_z\right)_z\right)\left|\dfrac{g_z}{g}\right|^2\\
&\quad -\dfrac{i}{2}\overline{\left(\dfrac{g}{g_z}\right)}
\left(\dfrac{g}{g_z}\phi_{z\overline{z}}+\overline{\left(\left(\dfrac{g}{g_z}\phi_z\right)_z\right)}
-\left(\overline{\left(\dfrac{g}{g_z}\right)}\phi_{\overline{z}}\right)_{\overline{z}}-\dfrac{g}{g_z}\overline{\phi}_{z\overline{z}}\right)\left|\dfrac{g_z}{g}\right|^2\\
&=-\im\left(\dfrac{g}{g_z}\left(\dfrac{g}{g_z}\phi_z\right)_z\right)\left|\dfrac{g_z}{g}\right|^2
-\im\left(\overline{\left(\dfrac{g}{g_z}\right)}\left(\overline{\left(\dfrac{g}{g_z}\right)}\phi_{\overline{z}}\right)_{\overline{z}}\right)\left|\dfrac{g_z}{g}\right|^2
+\left|\dfrac{g}{g_z}\right|^2\im(\phi_{z\overline{z}})\left|\dfrac{g_z}{g}\right|^2\\
&=|g_z|^2\left(-\im\left(\dfrac{g}{g_z}\left(\dfrac{g}{g_z}\phi_z\right)_z\right)
-\im\left(\overline{\left(\dfrac{g}{g_z}\right)}\left(\overline{\left(\dfrac{g}{g_z}\right)}\phi_{\overline{z}}\right)_{\overline{z}}\right)
+\dfrac{1}{|g_z|^2}\im(\phi_{z\overline{z}})\right)
\end{align*}
holds at $p$. 
Therefore we have the conclusion. 
\end{proof}

In \cite{brander}, Brander gave the Bj\"{o}rling formula for spacelike CMC surfaces and investigated singularities. 
We remark that criteria for a cuspidal edge, a swallowtail and a cuspidal cross cap are known in terms of the Bj\"{o}rling data (\cite{brander}). 
Moreover, a criterion for a cuspidal butterfly by the Bj\"{o}rling data is known (\cite{naka}). 

%%%%%SECTION 3%%%%%
\section{Geometric properties of surfaces given by certain representation formulae}\label{sec:property}
In this section, we study geometric properties of surfaces given by 
\eqref{eq:W-rep} or \eqref{eq:generalized-cmc} near singular points of the first kind. 

\subsection{Curvatures along singular curves}\label{sec:proof}
We show the following assertion related to shapes of surfaces given by 
\eqref{eq:W-rep} or \eqref{eq:generalized-cmc} at singular points of the first kind. 
\begin{thm}\label{thm:ks}
Let $D$ be a simply-connected domain in $\C$ and 
let $f\colon D\to\R^3$ be a frontal surface given by \eqref{eq:W-rep} $($resp. \eqref{eq:generalized-cmc}$)$ 
with the holomorphic data $(g,\omega)$ $($resp. an extended harmonic map $g)$ on $D$. 
Then the singular curvature $\kappa_s$ of $f$ is strictly negative at singular points of the first kind, 
and the limiting normal curvature $\kappa_\nu$ vanishes at non-degenerate singular points.
\end{thm}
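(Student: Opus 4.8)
The plan is to treat both formulae \eqref{eq:W-rep} and \eqref{eq:generalized-cmc} simultaneously by writing $f_z=c\,\hat\omega\,\bm{E}$ with $\bm{E}=(-2g,1+g^2,i(1-g^2))$ and $c=1/2$ (resp. $c=1/H$), and by expressing every second derivative through the two vectors $\bm{E}$ and $\bm{N}:=(-1,g,-ig)$. A direct computation gives $f_{zz}=c\,\hat\omega_z\bm{E}+2c\,\hat\omega g_z\bm{N}$ and $f_{z\overline{z}}=c\,\hat\omega_{\overline{z}}\bm{E}+2c\,\hat\omega g_{\overline{z}}\bm{N}$ together with their conjugates (so $f_{z\overline{z}}=0$ in the case \eqref{eq:W-rep}, since $f_z$ is holomorphic), while from \eqref{eq:normal} one reads off $\inner{\bm{E}}{\n}=0$ (equivalently $\inner{f_z}{\n}=0$) and $\inner{\bm{N}}{\n}=\hat\lambda/\sqrt{(1+|g|^2)^2+4|g|^2}$, where $\hat\lambda=|g|^2-1$ is the singularity identifier \eqref{eq:identifier}. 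Thus $\inner{f_{zz}}{\n}$, $\inner{f_{z\overline{z}}}{\n}$ and $\inner{f_{\overline{z}\overline{z}}}{\n}$ are all multiples of $\hat\lambda$.

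First I would settle $\kappa_\nu$. Writing the field $\xi$ of \eqref{eq:xi} (resp. \eqref{eq:cmc-xi}) as $\xi=W\partial_z+\overline{W}\partial_{\overline{z}}$ with $W=ig\,\overline{g}_{\overline{z}}$, one gets $\xi\xi f=Pf_z+\overline{P}f_{\overline{z}}+W^2f_{zz}+2|W|^2f_{z\overline{z}}+\overline{W}^2f_{\overline{z}\overline{z}}$ for a suitable function $P$; the first two terms are tangential, so $\inner{\xi\xi f}{\n}=W^2\inner{f_{zz}}{\n}+2|W|^2\inner{f_{z\overline{z}}}{\n}+\overline{W}^2\inner{f_{\overline{z}\overline{z}}}{\n}$ is again a multiple of $\hat\lambda$ and hence vanishes on the whole singular set $S(f)=\hat\lambda^{-1}(0)$. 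By \eqref{eq:invariants} this gives $\kappa_\nu\equiv0$ along the singular curve consisting of first-kind points, and, since the limiting normal curvature extends continuously to the singular curve, $\kappa_\nu=0$ at every non-degenerate singular point.

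For $\kappa_s$ I would work along the singular curve $\gamma$, where $\overline{g}=1/g$. In the case \eqref{eq:generalized-cmc} one first records that, since $\hat\omega=\overline{g}_z/(1-|g|^2)^2$ is at least continuous across $\gamma$ while $(1-|g|^2)^2\to0$ there, one has $g_{\overline{z}}=0$ on $\gamma$; hence the two representations behave alike on $\gamma$. On $\gamma$ introduce the real vectors $\bm{r}:=(-2,2\re g,2\im g)=2\re\bm{N}$ and $\bm{q}:=(0,\im g,-\re g)=\im\bm{N}$, so that $\bm{E}=g\,\bm{r}$, $\bm{N}=\tfrac12\bm{r}+i\bm{q}$, and $\{\bm{r},\bm{q},\n\}$ is an orthogonal triple with $|\bm{r}|^2=8$, $|\bm{q}|=1$ and $\det(\bm{r},\bm{q},\n)=-2\sqrt{2}$. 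Then $\xi f=s_1\bm{r}$ with $s_1=2c|\hat\omega|^2\im\phi$, where $\phi:=g_z/(g^2\hat\omega)$, and — after discarding all parts of $\xi\xi f$ parallel to $\bm{r}$ or to $\n$ — the surviving component is $-2\im(\nu)\,\bm{q}$, where $\nu:=2c\,\hat\omega g_zW^2=-2c|\hat\omega|^4|\phi|^2\overline{\phi}$, so $\im\nu=2c|\hat\omega|^4|\phi|^2\im\phi$. Substituting into \eqref{eq:invariants} and simplifying, the powers of $\im\phi$ and of $c$ collapse and one finds $\det(\xi f,\xi\xi f,\n)/|\xi f|^3=|\phi|^2/(8|c|\,|\im\phi|)>0$. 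Finally $\eps_\gamma=-1$: indeed $\det(\xi,\eta)=\im\phi$ as in Lemma \ref{lem:first-kind}, and a short computation using $g_{\overline{z}}=0$ on $\gamma$ gives $\eta\hat\lambda=i\phi-i\overline{\phi}=-2\im\phi$, whence $\det(\xi,\eta)\cdot\eta\hat\lambda=-2(\im\phi)^2<0$ at first-kind points. Therefore $\kappa_s=-|\phi|^2/(8|c|\,|\im\phi|)<0$ there (note $\phi\neq0$ since $\im\phi\neq0$).

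The routine part — the explicit $f_z$, $f_{zz}$, $f_{z\overline{z}}$ and the inner products $\inner{\bm{E}}{\n}$, $\inner{\bm{N}}{\n}$ — is immediate; the real work, and the point most prone to sign errors, is the bookkeeping in the $\kappa_s$ step: choosing the orthogonal frame $\{\bm{r},\bm{q},\n\}$, isolating the piece of $\xi\xi f$ not parallel to $\bm{r}$, and combining the orientation sign of $\det(\bm{r},\bm{q},\n)$ with $\eps_\gamma$ so that the two minus signs land correctly. The only place where the defining property of an extended harmonic map is used is the observation that $g_{\overline{z}}$ vanishes on $\gamma$ in the case \eqref{eq:generalized-cmc}.
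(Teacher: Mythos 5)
Your proposal is correct and follows essentially the same route as the paper: compute $\xi f$ and $\xi\xi f$ along the singular curve, observe that the normal component of $\xi\xi f$ is a multiple of $\hat\lambda=|g|^2-1$ (hence $\kappa_\nu=0$), evaluate $\det(\xi f,\xi\xi f,\n)>0$ and $\eps_\gamma=-1$ to get $\kappa_s<0$; your unified constant $c$, the frame $\{\bm{r},\bm{q},\n\}$, and the explicit use of $g_{\overline{z}}=0$ on the singular set in the CMC case all match the paper's computation (e.g.\ \eqref{eq:xi-f}, \eqref{eq:xixif}, \eqref{eq:sign}). Your final expression $\kappa_s=-|\phi|^2/(8|c|\,|\im\phi|)$ reproduces the paper's \eqref{eq:kappas} for $c=1/2$, and in the CMC case your constant in fact appears to be the correct one (the paper's \eqref{eq:cmc-det} seems to carry a spurious factor $1/2$), which in any case does not affect the sign and hence the theorem.
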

\begin{proof}
We first give a proof for the case of a surface given by \eqref{eq:W-rep}. 
Let $D$ be a simply-connected domain in $\C$. 
Let $f\colon D\to\R^3$ be a surface given by \eqref{eq:W-rep} with the holomorphic data $(g,\omega=\hat{\omega}dz)$ on $D$. 
Then we consider the first order directional derivative of $f$ in the direction $\xi$. 
By \eqref{eq:df} and \eqref{eq:xi}, we have 
\begin{align}\label{eq:xi-f}
\begin{aligned}
\xi f&=i\overline{\left(\dfrac{g_z}{g}\right)}f_z-i\dfrac{g_z}{g}f_{\overline{z}}
=i\left(\overline{\left(\dfrac{g_z}{g^2}\right)}\hat{\omega}-\dfrac{g_z}{g^2}\overline{\hat{\omega}}\right)(-1,\re(g),\im(g))\\
&=i\left(\overline{\left(\dfrac{g_z}{g^2\hat{\omega}}\right)}-\dfrac{g_z}{g^2\hat{\omega}}\right)|\hat{\omega}|^2|(-1,\re(g),\im(g))
=2\im\left(\dfrac{g_z}{g^2\hat{\omega}}\right)|\hat{\omega}|^2(-1,\re(g),\im(g))
\end{aligned}
\end{align}
along $\gamma$. 
Thus we see that 
\begin{equation}\label{eq:norm}
|\xi f|=2\sqrt{2}\left|\im\left(\dfrac{g_z}{g^2\hat{\omega}}\right)\right||\hat{\omega}|^2.
\end{equation}
Moreover, by \eqref{eq:normal} and \eqref{eq:xi-f}, it holds that 
\begin{equation}\label{eq:cross-xif}
\hat{\n}\times \xi f=\dfrac{4}{\sqrt{2}}\im\left(\dfrac{g_z}{g^2\hat{\omega}}\right)|\hat{\omega}|^2(0,-\im(g),\re(g))
\end{equation}
along $\gamma$, where $\hat{\n}$ is 
\begin{equation}\label{eq:normal2}
\hat{\n}(t)=\n(\gamma(t))=\dfrac{1}{2\sqrt{2}}(2,2\re(g),2\im(g))=\dfrac{1}{\sqrt{2}}(1,\re(g),\im(g)).
\end{equation} 

We next consider the second order directional derivative $\xi\xi f$. 
Since $\xi=ig\overline{g_z}$, we see that 
\begin{align*}
\xi\xi f&=-g\overline{g}_{\overline{z}}(g_z\overline{g}_{\overline{z}}f_z+g\overline{g}_{\overline{z}}f_{zz}-g_{zz}\overline{g}f_{\overline{z}})
+g_z\overline{g}(g\overline{g}_{\overline{zz}}f_z-g_z\overline{g}_{\overline{z}}f_{\overline{z}}-g_z\overline{g}f_{\overline{z}\overline{z}})\\
&=(g_z\overline{g}_{\overline{zz}}-gg_{z}\overline{g}_{\overline{z}}^2)f_z+(\overline{g}_{\overline{z}}g_{zz}-\overline{g}\overline{g}_{\overline{z}}g_z^2)f_{\overline{z}}
-(g^2\overline{g}_{\overline{z}}^2f_{zz}+\overline{g}^2g_z^2f_{\overline{z}\overline{z}})
\end{align*}
along $\gamma$. 
Setting $X=g_z\overline{g}_{\overline{zz}}-gg_{z}\overline{g}_{\overline{z}}^2$, it follows that 
$$Xf_{z}+\overline{X}f_{\overline{z}}=\re(Xg\hat{\omega})(-1,\re(g),\im(g)).$$
This is perpendicular to $\hat{\n}$ and parallel to $\xi f$. 
We calculate  $g^2\overline{g_z}^2f_{zz}+\overline{g}^2g_z^2f_{\overline{z}\overline{z}}$. 
By \eqref{eq:df}, we have 
\begin{align}\label{eq:ddf}
\begin{aligned}
f_{zz}&=(-1,g,-ig)g_z\hat{\omega}+\dfrac{1}{2}(-2g,1+g^2,i(1-g^2))\hat{\omega}_z,\\
f_{\overline{z}\overline{z}}&=(-1,\overline{g},i\overline{g})\overline{g}_{\overline{z}}\overline{\hat{\omega}}
+\dfrac{1}{2}(-2\overline{g},1+\overline{g}^2,-i(1-\overline{g}^2))\overline{\hat{\omega}}_{\overline{z}}
\end{aligned}
\end{align}
at $p$. 
Thus we have 
\begin{align*}
g^2\overline{g}_{\overline{z}}^2f_{zz}+g_z^2\overline{g}^2f_{\overline{z}\overline{z}}
&=\re(g^3\overline{g}_{\overline{z}}\hat{\omega}_z)\psi%(-1,\re(g),\im(g))
+|g_z|^2|\hat{\omega}|^2\re\left(\overline{\left(\dfrac{g_z}{g^2\hat{\omega}}\right)}(-1,g,-ig)\right)
\end{align*}
along $\gamma$. 
Here we set $\phi=(g_z/g^2\hat{\omega})$ and $\psi=(-1,\re(g),\im(g))$. 
Then 
\begin{equation}\label{eq:xixif}
\xi\xi f=Y\psi-|g_z|^2|\hat{\omega}|^2(-\overline{\phi}-\phi,g\overline{\phi}+\overline{g}\phi,-i(g\overline{\phi}-\overline{g}\phi))
\end{equation}
holds, where $Y$ is a some function. 
It is obvious that $\inner{\psi}{\hat{\n}}=\det(\psi,\xi f,\hat{\n})=0$ at a singular point $p$. 
Therefore by \eqref{eq:cross-xif} and \eqref{eq:xixif}, we have 
\begin{equation}\label{eq:determinant}
\det(\xi f,\xi\xi f,\hat{\n})=\inner{\hat{\n}\times\xi f}{\xi\xi f}=\dfrac{8}{\sqrt{2}}\left(\im\left(\dfrac{g_z}{g^2\hat{\omega}}\right)\right)^2|g_z|^2|\hat{\omega}|^4
\end{equation}
along the singular curve $\gamma$. 

On the other hand, by \eqref{eq:identifier} and \eqref{eq:eta}, we have 
\begin{equation}\label{eq:dir-lambda}
\eta\hat{\lambda}=\dfrac{i}{g\hat{\omega}}\hat{\lambda}_z-\dfrac{i}{\overline{g}\overline{\hat{\omega}}}\hat{\lambda}_{\overline{z}}
=i\left(\dfrac{g_z}{g^2\hat{\omega}}-\overline{\left(\dfrac{g_z}{g^2\hat{\omega}}\right)}\right)=-2\im\left(\dfrac{g_z}{g^2\hat{\omega}}\right)
\end{equation}
at a singular point $p$, 
and hence we get 
\begin{equation}\label{eq:sign}
\eps_{\gamma}=\sgn\left(-2\left(\im\left(\dfrac{g_z}{g^2\hat{\omega}}\right)\right)^2\right)=-1
\end{equation}
by \eqref{eq:xi-eta} and \eqref{eq:dir-lambda}. 
Thus we have 
\begin{equation}\label{eq:kappas}
\kappa_s(\gamma)=\left.\eps_{\gamma}\dfrac{\det(\xi f,\xi\xi f,\n)}{|\xi f|^3}\right|_{\gamma}
=\left.-\dfrac{|g_z|^2}{4\left|\im\left(\dfrac{g_z}{g^2\hat{\omega}}\right)\right||\hat{\omega}|^2}\right|_{\gamma}
\end{equation}
along $\gamma$ by \eqref{eq:norm}, \eqref{eq:determinant} and \eqref{eq:sign}. 
This implies that $\kappa_s$ is strictly negative. 

Further we consider the limiting normal curvature $\kappa_\nu$. 
By \eqref{eq:normal2} and \eqref{eq:xixif}, 
\begin{align*}
\inner{\xi\xi f}{\hat{\n}}&=-\dfrac{|\hat{\omega}|^2}{\sqrt{2}}(-\phi-\overline{\phi}+\re(g)(g\overline{\phi}+\overline{g}\phi)-i\im(g)(g\overline{\phi}-\overline{g}\phi))\\
&=-\dfrac{|\hat{\omega}|^2}{\sqrt{2}}\left(
-\phi-\overline{\phi}+\dfrac{1}{2}(g^2\overline{\phi}+\phi+\overline{\phi}+\overline{g}^2\phi)
-\dfrac{1}{2}(g^2\overline{\phi}-\phi-\overline{\phi}+\overline{g}^2\phi)
\right)\\
&=0
\end{align*}
holds at a singular point $p$. 
This implies that $\kappa_\nu=0$ along the singular curve $\gamma$ (see \eqref{eq:invariants}). 
Therefore we have the assertion for the case of a surface given by \eqref{eq:W-rep}.

We next consider the case for a surface given by \eqref{eq:generalized-cmc}. 
Let $f\colon D\to\R^3$ be a surface given by \eqref{eq:generalized-cmc} with an extended harmonic map $g$ on $D$. 
In this case, the Gauss map $\n$ is given by \eqref{eq:normal-cmc}.
Then we calculate the first and the second order directional derivatives in the direction $\xi$ as in \eqref{eq:cmc-xi}. 
By \eqref{eq:diff-cmc} and the relation $\overline{g}=1/g$ on the set of singular points $S_1(f)$, it follows that 
\begin{align}\label{eq:cmc-xif}
\begin{aligned}
\xi f&=i\overline{\left(\dfrac{g_z}{g}\right)}f_z-i\left(\dfrac{g_z}{g}\right)f_{\overline{z}}
=\dfrac{2i}{H}\left(\overline{\left(\dfrac{g_z}{g^2\hat{\omega}}\right)}-\dfrac{g_z}{g^2\hat{\omega}}\right)|\hat{\omega}|^2(-1,\re(g),\im(g))\\
&=\dfrac{4|\hat{\omega}|^2}{H}\im\left(\dfrac{g_z}{g^2\hat{\omega}}\right)(-1,\re(g),\im(g))
\end{aligned}
\end{align}
on $S_1(f)$. 
In particular, if $p\in S_1(f)$ is of the first kind, then $\xi f$ does not vanish at $p$ by Lemma \ref{lem:first-kind-cmc}. 
By \eqref{eq:diff-cmc}, 
\begin{align*}
f_{zz}&
=\dfrac{2g_z\hat{\omega}}{H}(-1,g,-ig)+\dfrac{2g\hat{\omega}_z}{H}(-1,\re(g),\im(g)),\\
f_{z\overline{z}}&=\dfrac{2g\hat{\omega}_{\overline{z}}}{H}(-1,\re(g),\im(g)),\quad
f_{\overline{z}z}=\dfrac{2\overline{g}\overline{\hat{\omega}}_z}{H}(-1,\re(g),\im(g)),\\
f_{\overline{z}\overline{z}}&=\dfrac{2\overline{g}_{\overline{z}}\overline{\hat{\omega}}}{H}(-1,\overline{g},i\overline{g})
+\dfrac{2\overline{g}\overline{\hat{\omega}}_{\overline{z}}}{H}(-1,\re(g),\im(g))
\end{align*}
hold at $p\in S_1(f)$ since $g_{\overline{z}}=\overline{g}_z=0$ at $p$. 
Therefore we have 
\begin{equation}\label{eq:cmc-xixif}
\xi\xi f=Z(-1,\re(g),\im(g))
-\dfrac{2|g_z|^2|\hat{\omega}|^2}{H}(-\phi-\overline{\phi},g\overline{\phi}+\overline{g}\phi,-i(g\overline{\phi}-\overline{g}\phi))
\end{equation}
holds on $S_1(f)$, where $Z$ is a some function and $\phi=g_z/g^2\hat{\omega}$. 

We set $\hat{\n}=\n\circ\gamma$, where $\gamma$ is a singular curve through $p\in S_1(f)$. 
This is expressed as \eqref{eq:normal2}. 
Since $(-1,\re(g),\im(g))$ is perpendicular to $\hat{\n}$, we have $\inner{\xi\xi f}{\hat{\n}}=0$. 
This implies that the limiting normal curvature $\kappa_\nu$ vanishes identically along $\gamma$. 
Moreover, by \eqref{eq:cmc-xif} and \eqref{eq:normal2}, we obtain 
\begin{equation}\label{eq:cross-cmc}
\hat{\n}\times \xi f=\dfrac{8}{\sqrt{2}H}\im\left(\dfrac{g_z}{g^2\hat{\omega}}\right)|\hat{\omega}|^2(0,-\im(g),\re(g))
\end{equation}
along $\gamma$. 
Thus by \eqref{eq:cmc-xixif} and \eqref{eq:cross-cmc}, it follows that 
\begin{equation}\label{eq:cmc-det}
\det(\xi f,\xi\xi f,\hat{\n})=\inner{\hat{\n}\times \xi f}{\xi\xi f}
=\dfrac{16}{\sqrt{2}H^2}\left(\im\left(\dfrac{g_z}{g^2\hat{\omega}}\right)\right)^2|g_z|^2|\hat{\omega}|^4(>0)
\end{equation}

On the other hand, the singular identifier $\hat{\lambda}$ of $f$ is $\hat{\lambda}=g\overline{g}-1$, 
and the null vector field $\eta$ is $\eta=i/g\hat{\omega}$ (cf. \eqref{eq:cmc-xi}), and hence we have 
$$\eta\hat{\lambda}=-2\im\left(\dfrac{g_z}{g^2\hat{\omega}}\right),\quad \det(\xi,\eta)=\im\left(\dfrac{g_z}{g^2\hat{\omega}}\right)$$
at $p\in S_1(f)$. 
Thus it holds that $\eps_{\gamma}=\sgn(\eta\hat{\lambda}\cdot\det(\xi,\eta))=-1$. 
By \eqref{eq:cmc-det}, $\kappa_s$ is given as 
$$\kappa_s=-\dfrac{|H||g_z|^2}{16\left|\im\left(\dfrac{g_z}{g^2\hat{\omega}}\right)\right||\hat{\omega}|^2}$$
along $\gamma$. 
This completes the proof. 
\end{proof}

We remark that if $p$ is a singular point of the second kind and the singular curve $\gamma(t)$ through $p=\gamma(0)$ 
consists of singular points of the first kind for $t\neq 0$, then the singular curvature behaves $\lim_{t\to0}\kappa_s(t)=-\infty$ (\cite{geomfront}).  

\begin{ex}
Let $f$ be a surface given by \eqref{eq:W-rep} with the holomorphic data $(g,\omega)=(z,dz)$ on $\C$. 
If we regard $f$ as a surface in $\R^3_1$, 
this surface is known as the {\it Lorentzian Enneper surface} (see Figure \ref{fig:one}). 
The set of singular points $S(f)$ is $S(f)=\{|z|=1\}$. 
The points $z=\pm1,\pm i$ are swallowtails and the points $z=e^{\frac{\pi i}{4}},e^{\frac{3\pi i}{4}},e^{\frac{5\pi i}{4}},e^{\frac{7\pi i}{4}}$ 
are cuspidal cross caps (cf. \cite{maxface}). 
Thus the curve $\gamma(t)=e^{i t}$ consists of singular points of the first kind for $t\in(0,\pi/2)\cup(\pi/2,\pi)\cup(\pi,3\pi/2)\cup(3\pi/2,2\pi)$. 
Regard $f$ as a surface in $\R^3$. 
Then we have the singular curvature $\kappa_s$ as 
$$\kappa_s=\dfrac{-1}{4|\sin 2t|}<0$$
along $\gamma(t)$ for $t\in(0,\pi/2)\cup(\pi/2,\pi)\cup(\pi,3\pi/2)\cup(3\pi/2,2\pi)$.

\begin{figure}[htbp]
 \begin{center}
  \includegraphics[width=5cm]{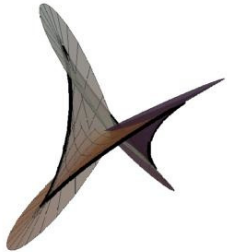}
 \end{center}
 \caption{The Lorentzian Enneper surface. The thick curve is the singular locus.}
 \label{fig:one}
\end{figure}
\end{ex}

\subsection{Behavior of the Gaussian curvature and singularities of the Gauss map}\label{sec:gauss}
We consider behavior of the Gaussian curvature of $f$ given by $\eqref{eq:W-rep}$ or \eqref{eq:generalized-cmc} 
as a surface in $\R^3$. 
First, we investigate the case of a surface given by \eqref{eq:W-rep} with the holomorphic data $(g,\omega)$. 
We consider behavior of the Gauss map $\n$ as in \eqref{eq:normal}. 
The first order derivatives $\n_z$ and $\n_{\overline{z}}$ of $\n$ by $z$ and $\overline{z}$ are 
\begin{align}\label{eq:dn}
\begin{aligned}
\n_z&=\dfrac{g_z}{\sqrt{(1+|g|^2)^2+4|g|^2}}\left(\overline{g}(1+\hat{\rho}(1+|g|^2)),2\hat{\rho}\overline{g}\re(g)+1,2\hat{\rho}\overline{g}\im(g)-i\right),\\
\n_{\overline{z}}&=\dfrac{\overline{g_z}}{\sqrt{(1+|g|^2)^2+4|g|^2}}
\left({g}(1+\hat{\rho}(1+|g|^2)),2\hat{\rho}{g}\re(g)+1,2\hat{\rho}{g}\im(g)+i\right),
\end{aligned}
\end{align}
where 
$$\hat{\rho}=-\dfrac{3+|g|^2}{(1+|g|^2)^2+4|g|^2}.$$
Thus the vector product $\n_z\times \n_{\overline{z}}$ is given as 
\begin{equation}\label{eq:cross-dn}
\n_z\times\n_{\overline{z}}
=\dfrac{2i|g_z|^2(1-|g|^2)}{((1+|g|^2)^2+4|g|^2)^2}(1+|g|^2,2\re(g),2\im(g)),
\end{equation}
and hence we have 
\begin{equation}\label{eq:Lambda}
\Lambda=\det(\n_u,\n_v,\n)=\inner{\n_u\times\n_v}{\n}=-2i\inner{\n_z\times\n_{\overline{z}}}{\n}
=-\dfrac{4|g_z|^2(|g|^2-1)}{((1+|g|^2)^2+4|g|^2)^{3/2}}
\end{equation}
by \eqref{eq:normal} and \eqref{eq:cross-dn}. 
Therefore the Gaussian curvature $K_{\mathrm{E}}$ of $f$ given by \eqref{eq:W-rep} is 
\begin{equation}\label{eq:Gauss-E}
K_{\mathrm{E}}=\dfrac{\Lambda}{\lambda}=-\dfrac{4|g_z|^2}{((1+|g|^2)^2+4|g|^2)^2|\hat{\omega}|^2}
\end{equation}
by \eqref{eq:lambda} and \eqref{eq:Lambda}. 
This implies that $K_{\mathrm{E}}$ is strictly negative near a non-degenerate singular point $p$ of $f$. 

We next consider the case of a surface given by \eqref{eq:generalized-cmc} with an extended harmonic map $g$. 
In this case, by similar calculations, we have 
\begin{equation}\label{eq:dn-cmc}
\n_{z}=\rho^3
\begin{pmatrix}
X(\rho^{-2}-(3+|g|^2)(1+|g|^2))\\ 
\rho^{-2}(g_z+\overline{g}_z)-2X(3+|g|^2)\re(g)\\
\dfrac{\rho^{-2}}{i}(g_z-\overline{g}_z)-2X(3+|g|^2)\im(g)
\end{pmatrix}^T,\quad 
\n_{\overline{z}}=\rho^3
\begin{pmatrix}
\overline{X}(\rho^{-2}-(3+|g|^2)(1+|g|^2))\\
\rho^{-2}(g_{\overline{z}}+\overline{g}_{\overline{z}})-2\overline{X}(3+|g|^2)\re(g)\\
\dfrac{\rho^{-2}}{i}(g_{\overline{z}}-\overline{g}_{\overline{z}})-2\overline{X}(3+|g|^2)\im(g)
\end{pmatrix}^T,
\end{equation}
where we set 
$$\rho=\dfrac{1}{\sqrt{(1+|g|^2)^2+4|g|^2}},\quad X=g_z\overline{g}+g\overline{g}_z,$$
and $\x^T$ is a transposed vector of $\x$.  
Then by \eqref{eq:dn-cmc}, the cross product of $\n_z$ and $\n_{\overline{z}}$ is 
\begin{equation*}\label{eq:cross-dn-cmc}
\n_z\times\n_{\overline{z}}=\dfrac{2(1-|g|^2)(|g_{\overline{z}}|^2-|g_z|^2)}{i((1+|g|^2)^2+4|g|^2)}(1+|g|^2,2\re(g),2\im(g)).
\end{equation*}
Thus the set of singular points of $\n$ is 
$S(\n)=\{p\in D\ |\ |g(p)|=1\}\cup\{p\in D\ |\ |g_z(p)|=|g_{\overline{z}}(p)|\}$. 
Moreover, we have 
\begin{equation}\label{eq:cross-dn-cmc2}
\n_u\times \n_v=-2i\n_z\times\n_{\overline{z}}=\dfrac{-4(1-|g|^2)(|g_{\overline{z}}|^2-|g_z|^2)}{(1+|g|^2)^2+4|g|^2}(1+|g|^2,2\re(g),2\im(g)),
\end{equation}
and hence the Gaussian curvature $K_E$ is 
\begin{equation}\label{eq:EGauss-cmc}
K_E=\dfrac{\det(\n_u,\n_v,\n)}{\det(f_u,f_v,\n)}=\dfrac{4(|g_{\overline{z}}|^2-|g_z|^2)H^2}{((1+|g|^2)^2+4|g|^2)^2|\hat{\omega}|^2}
\end{equation}
by \eqref{eq:lambda-cmc} and \eqref{eq:cross-dn-cmc2}. 
This implies that $K_E$ changes the sign across the set $\{p\in D\ |\ |g_z(p)|=|g_{\overline{z}}(p)|\}$. 
Further, when $p$ is a non-degenerate singular point of $f$, then $g_z(p)\neq0$ and $g_{\overline{z}}(p)=0$. 
Thus $K_E$ is strictly negative at $p$ by \eqref{eq:EGauss-cmc}. 
As a result, we have the following.

\begin{prop}\label{prop:Gaussian}
Let $f\colon D\to\R^3$ be a surface given by \eqref{eq:W-rep} with the holomorphic data $(g,\omega)$ or 
\eqref{eq:generalized-cmc} with an extended harmonic map $g$. 
Let $p$ be a non-degenerate singular point of $f$. 
Then its Gaussian curvature $K_E$ of $f$ is strictly negative at $p$. 
\end{prop}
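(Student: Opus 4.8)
The plan is to read the conclusion off the explicit formulae for the Gaussian curvature established in the computation above: equation \eqref{eq:Gauss-E} in the case of a surface given by \eqref{eq:W-rep}, and equation \eqref{eq:EGauss-cmc} in the case of a surface given by \eqref{eq:generalized-cmc}. In each case $K_E$ is a product of a sign-definite rational expression in $g$ and its derivatives, so the whole task is to check that the possibly vanishing or sign-changing factor is in fact negative (hence nonzero) at a non-degenerate singular point $p$.

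First I would treat the case of \eqref{eq:W-rep}. Formula \eqref{eq:Gauss-E} reads $K_E = -4|g_z|^2 \big/ \big( ((1+|g|^2)^2+4|g|^2)^2|\hat{\omega}|^2 \big)$. The denominator is strictly positive near $p$: one has $(1+|g|^2)^2+4|g|^2 \geq 1 > 0$, and $|\hat{\omega}|^2 > 0$ because $(1+|g|^2)^2|\omega|^2 \neq 0$ is built into the definition of holomorphic data (Definition \ref{def:hol-data}(2)) and $|g|$ is finite at $p$. The numerator $-4|g_z|^2$ is $\leq 0$ and vanishes exactly where $g_z = 0$; since $p$ is non-degenerate, $g_z(p) \neq 0$ by Lemma \ref{lem:non-deg}. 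Hence $K_E(p) < 0$.

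Next I would treat the case of \eqref{eq:generalized-cmc}. Formula \eqref{eq:EGauss-cmc} reads $K_E = 4(|g_{\overline{z}}|^2-|g_z|^2)H^2 \big/ \big( ((1+|g|^2)^2+4|g|^2)^2|\hat{\omega}|^2 \big)$. Again $H^2 > 0$ and the denominator is positive, using now that $\hat{\omega}$ never vanishes on $\{|g|<\infty\}$ for an extended harmonic map (Definition \ref{def:generalized-cmc}). So it remains to show $|g_{\overline{z}}(p)|^2 - |g_z(p)|^2 < 0$. Non-degeneracy gives $g_z(p) \neq 0$ by Lemma \ref{lem:nondeg-cmc}, so it suffices to see $g_{\overline{z}}(p) = 0$. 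This is the one point needing an argument rather than a glance: since $\hat{\omega} = \overline{g}_z/(1-|g|^2)^2$ extends across $S_1(f)=\{p\in D \mid |g(p)|=1\}$ to a function of class $C^1$ (Definition \ref{def:generalized-cmc}), and $1-|g|^2$ vanishes identically on $S_1(f)$, the numerator $\overline{g}_z = \overline{g_{\overline{z}}}$ must vanish on $S_1(f)$; in particular $g_{\overline{z}}(p)=0$. Therefore $|g_{\overline{z}}(p)|^2 - |g_z(p)|^2 = -|g_z(p)|^2 < 0$, and $K_E(p) < 0$.

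I do not expect a serious obstacle here: the statement is essentially a corollary of the formulae \eqref{eq:Gauss-E} and \eqref{eq:EGauss-cmc} already in hand, and the remaining work is sign bookkeeping together with the two non-degeneracy lemmas. The only step that deserves a sentence is the vanishing of $g_{\overline{z}}$ at a non-degenerate singular point in the CMC case, which is automatic in the holomorphic-data case (there $g$ is meromorphic) but in the CMC case has to be extracted from the $C^1$-extendability of $\hat{\omega}$ across $\{|g|=1\}$.
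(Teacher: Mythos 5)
Your proof is correct and follows essentially the same route as the paper: both read the sign of $K_E$ directly off the formulae \eqref{eq:Gauss-E} and \eqref{eq:EGauss-cmc}, using $g_z(p)\neq0$ from the non-degeneracy lemmas and $g_{\overline{z}}(p)=0$ in the CMC case. The only difference is that you supply an explicit justification for $g_{\overline{z}}(p)=0$ (via the $C^1$-extendability of $\hat{\omega}=\overline{g}_z/(1-|g|^2)^2$ across $\{|g|=1\}$), a fact the paper uses without comment.
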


By Theorem \ref{thm:ks} and Proposition \ref{prop:Gaussian}, 
we have the following assertion immediately. 

\begin{cor}\label{cor:ks-Gauss}
Under the same assumptions as in Proposition \ref{prop:Gaussian}, the sign of the singular curvature $\kappa_s$ 
at singular point of the first kind $p$ of $f$ coincides with 
the sign of the Gaussian curvature $K_E$ of $f$ at $p$.
\end{cor}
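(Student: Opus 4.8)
The plan is to observe that this corollary is a direct consequence of the two results just established, with essentially no new computation required. The only conceptual point to pin down first is that a singular point of the first kind is in particular non-degenerate: indeed, the notions of ``first kind'' and ``second kind'' are only defined at non-degenerate singular points (one needs the singular curve $\gamma$ and the null vector field $\eta$ to even formulate the condition that $\xi$ and $\eta$ are linearly independent), so the hypothesis of Corollary \ref{cor:ks-Gauss} places $p$ inside the scope of Proposition \ref{prop:Gaussian}.

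Granting that, the argument proceeds in two steps. First I would invoke Theorem \ref{thm:ks}: for a surface $f$ given by \eqref{eq:W-rep} with holomorphic data $(g,\omega)$ or by \eqref{eq:generalized-cmc} with an extended harmonic map $g$, the singular curvature satisfies $\kappa_s(p)<0$ at every singular point of the first kind (this is visible explicitly from the formulae \eqref{eq:kappas} and its CMC analogue, both of which are manifestly negative). Second I would invoke Proposition \ref{prop:Gaussian}: under the same hypotheses, the Euclidean Gaussian curvature satisfies $K_E(p)<0$ at every non-degenerate singular point, which by the previous paragraph includes $p$. Combining, $\sgn\kappa_s(p)=-1=\sgn K_E(p)$, which is exactly the assertion.

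I do not anticipate any real obstacle here; the ``hard part'' has already been absorbed into Theorem \ref{thm:ks} and Proposition \ref{prop:Gaussian}. The only thing to be careful about in the write-up is not to overstate the conclusion: the statement is about the coincidence of \emph{signs}, not about any quantitative comparison between $\kappa_s$ and $K_E$, so the proof should simply record that both invariants are strictly negative at $p$ and stop there. Accordingly the proof will be a single short paragraph citing Theorem \ref{thm:ks} and Proposition \ref{prop:Gaussian}.
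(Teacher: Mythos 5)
Your proposal is correct and matches the paper's own treatment: the corollary is stated there as an immediate consequence of Theorem \ref{thm:ks} and Proposition \ref{prop:Gaussian}, with no further argument, exactly as you describe. Your remark that a singular point of the first kind is by definition non-degenerate (so that Proposition \ref{prop:Gaussian} applies) is the only logical hinge, and you handle it correctly.
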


\begin{rem}
Let $f\colon D\to\R^3_1$ be a maxface given by the Weierstrass data $(g,\hat{\omega}dz)$. 
Then the (Lorentzian) Gaussian curvature $K_L$ of $f$ is given as
$$K_L=\dfrac{|g_z|^2}{(1-|g|^2)^4|\hat{\omega}|^2}$$
on the set of regular points (cf. \cite{maxface}). 
Thus $K_L$ is non-negative on the set of regular points. 
On the other hand, let $f\colon D\to\R^3_1$ be an extended spacelike CMC $H(\neq0)$ surface with extended harmonic map $g$. 
Then the Gaussian curvature $K_L$ of $f$ is given by 
$$K_L=H^2\left(\left|\dfrac{g_z}{g_{\overline{z}}}\right|^2-1\right)$$
on the set of regular points (see \cite{an,umeda}). 
Thus there are possibilities that $K_L$ takes positive or negative value. 
In particular, $K_L$ is unbounded near a singular points in both cases.
\end{rem}

For a front in $\R^3$ with a cuspidal edge $p$, it follows that if the Gaussian curvature $K_E$ is non-negative near $p$, 
then the singular curvature is non-positive (\cite[Theorem 3.1]{geomfront}). 
However, the inverse of this fact does not hold in general. 
By the above discussions, we can construct several examples of frontal surfaces in $\R^3$ with bounded negative Gaussian curvatures 
and negative singular curvatures along the singular curves by \eqref{eq:W-rep} and \eqref{eq:generalized-cmc} .

We remark that Akamine \cite{akamine} investigated relationships between signs of the singular curvature 
and the (Lorentzian) Gaussian curvature 
for {\it timelike minfaces} which are timelike surfaces with vanishing mean curvature admitting certain singularities. 

We focus on singularities of the Gauss map $\n$. 
By \eqref{eq:Lambda} and \eqref{eq:cross-dn-cmc2}, a singular point $p$ of $f$ is also a singular point of $\n$. 
Thus locally, we may consider $\hat{\lambda}=|g|^2-1$ and $\xi=i\overline{(g_z/g)}$ as a singularity identifier and the singular direction of $\n$, respectively. 
Since $g_z(p)\neq0$, $\n$ has a non-degenerate singular point at $p$. 
\begin{prop}\label{prop:sing-n}
Suppose that the Gauss map $\n$ of a surface $f\colon D\to\R^3$ given by \eqref{eq:W-rep} with the holomorphic data $(g,\omega)$ 
$($resp. by \eqref{eq:generalized-cmc} with an extended harmonic map $g)$ has a non-degenerate singularity at $p$. 
Then $p$ must be a fold of $\n$. 
\end{prop}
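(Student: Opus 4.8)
The plan is to apply the standard criterion for fold singularities of maps between surfaces: recalling that $\n\colon D\to S^2$ has a fold at $p$ when it is $\mathcal{A}$-equivalent there to the germ $(u,v)\mapsto(u,v^2)$, a non-degenerate singular point $p$ of such a map is a fold if and only if a null vector field is transverse to the singular set $S(\n)$ at $p$ (equivalently, the null direction is linearly independent of the singular direction at $p$); see, e.g., \cite{agv,ifrt}. As recorded just before the statement, $p\in S(\n)$ is a non-degenerate singular point, $S(\n)=S(f)$ near $p$, and one may take $\hat{\lambda}=|g|^2-1$ as a singularity identifier and $\xi$, identified with $i\overline{(g_z/g)}$ under \eqref{eq:identify}, as the singular direction of $\n$. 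So it suffices to compute a null vector field $\eta_{\n}$ of $\n$ and to check that $\xi$ and $\eta_{\n}$ are linearly independent at $p$.

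To determine $\eta_{\n}$, I would use that $|g|^2=1$ on $S(\n)$, so $\hat{\rho}=-1/2$ and $1+\hat{\rho}(1+|g|^2)=0$ in \eqref{eq:dn}; for the case \eqref{eq:generalized-cmc} the corresponding entry of \eqref{eq:dn-cmc} vanishes in the same way, and in addition $g_{\overline z}(p)=0$ at a non-degenerate singular point. Substituting this, and using $1-\overline g^2=2i\,\im(g)\,\overline g$ and $1+\overline g^2=2\re(g)\,\overline g$ on $\{|g|=1\}$, one finds that $\n_z$ at $p$ is a nonzero complex multiple of $g_z\,(0,\,1-\overline g^2,\,-i(1+\overline g^2))$, while $\n_{\overline z}=\overline{\n_z}$ since $\n$ is $\R^3$-valued. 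Hence $d\n_p(\zeta\partial_z+\overline\zeta\partial_{\overline z})=2\re(\zeta\n_z)$ vanishes precisely when $\zeta g_z/g\in\R$, so the null direction of $\n$ at $p$ is $\eta_{\n}$, identified with $g/g_z$ under \eqref{eq:identify}; the same computation at the (automatically non-degenerate) points of $S(\n)$ near $p$ shows that $\eta_{\n}=(g/g_z)\partial_z+\overline{(g/g_z)}\partial_{\overline z}$ is a null vector field of $\n$ on a neighbourhood of $p$.

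It then remains to check transversality. Using \eqref{eq:identify},
\[
\det(\xi,\eta_{\n})=\im\bigl(\overline{\xi}\,\eta_{\n}\bigr)=\im\!\left(-i\,\frac{g_z}{g}\cdot\frac{g}{g_z}\right)=\im(-i)=-1\neq0
\]
at $p$, so $\xi$ and $\eta_{\n}$ are linearly independent there; equivalently $\eta_{\n}\hat{\lambda}(p)=2|g(p)|^2=2\neq0$, i.e.\ $\eta_{\n}$ is transverse to $S(\n)$ at $p$. By the fold criterion above, $\n$ has a fold at $p$, and the argument is identical in both cases once $\eta_{\n}$ has been found.

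I expect the main obstacle to be the computation of the null direction $\eta_{\n}$ of the Gauss map: one must exploit that precisely on $\{|g|=1\}$ the coefficient $1+\hat{\rho}(1+|g|^2)$ in \eqref{eq:dn} (and the analogous entry of \eqref{eq:dn-cmc}) vanishes, so that $\n_z$ collapses onto a single rank-one direction, and the two representation formulae must be handled separately here — the case \eqref{eq:generalized-cmc} additionally requiring $g_{\overline z}(p)=0$ at a non-degenerate singular point.
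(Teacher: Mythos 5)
Your argument is correct and essentially identical to the paper's: both reduce the claim to Whitney's fold criterion by computing that $\n_z$ collapses onto the real direction $(0,\im(g),-\re(g))$ along $\{|g|=1\}$ (using $g_{\overline z}=0$ there in the CMC case), reading off the null direction of $\n$, and checking $\det(\xi,\eta_{\n})\neq0$. The only cosmetic difference is the normalization of the null field — you take $g/g_z$ while the paper takes $\overline{(g_z/g)}$, which agree up to the positive factor $|g_z/g|^2$ at $p$ by \eqref{eq:g/gz}.
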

Here a {\it fold} is a map germ $h\colon(\R^2,0)\to(\R^2,0)$ which is $\mathcal{A}$-equivalent to the germ 
$(u,v)\mapsto(u,v^2)$ at the origin. 

\begin{proof}
We look for a null vector field $\eta^{\n}$ of $\n$. 
By \eqref{eq:dn} and \eqref{eq:dn-cmc}, we see that 
$$
\n_z=\dfrac{i}{2\sqrt{2}}\left(\dfrac{g_z}{g}\right)(0,\im(g),-\re(g)),\quad
\n_{\overline{z}}=-\dfrac{i}{2\sqrt{2}}\overline{\left(\dfrac{g_z}{g}\right)}(0,\im(g),-\re(g))
$$
hold at $p$. 
Here we used the relation $g_{\overline{z}}=0$ at $p$ for the case of a surface given by \eqref{eq:generalized-cmc}. 
Thus we can take $\eta^{\n}$ as 
\begin{equation}\label{eq:eta-n}
\eta^{\n}=\overline{\left(\dfrac{g_z}{g}\right)}
\leftrightarrow 
\eta^{\n}=\overline{\left(\dfrac{g_z}{g}\right)}\partial_z+{\left(\dfrac{g_z}{g}\right)}\partial_{\overline{z}}
\end{equation}
along the singular curve $\gamma$ through $p$. 

Using the singular direction $\xi$ as in \eqref{eq:xi} and the null vector $\eta^{\n}$ as in \eqref{eq:eta-n}, we have 
$$\det(\xi,\eta^{\n})=\im(\overline{\xi}\eta^{\n})=-\left|\dfrac{g_z}{g}\right|^2=-|g_z|^2\neq0$$
along $\gamma$. 
This implies that $\n$ has a fold at $p$ (see \cite[Proposition 2.1]{whitney}). 
\end{proof}
By this proposition, the curve $\hat{\n}=\n\circ\gamma$ is a regular spherical curve.

%%%%%SECTION 4%%%%%
\section{A characterization of a fold singular point}\label{sec:fold}
For a $C^\infty$ map $f\colon\R^2\to\R^3$, 
we say that a singular point $p$ is a {\it fold singular point} of $f$ if $f$ is $\mathcal{A}$-equivalent to the germ $(u,v)\mapsto(u,v^2,0)$ at the origin (cf. \cite{hks}). 
We note that a fold singular point is a singular point of the first kind of a frontal $f\colon \R^2\to\R^3$. 
In \cite{fkkrsuyy}, the notion of a {\it fold singularity} for a singular point of a $C^\infty$ map from $\R^2$ to $\R^3$ is introduced, 
and it is shown that a fold singular point is actually a fold singularity for the case of maxfaces (\cite[Lemma 2.17]{fkkrsuyy}). 
In this section, we give a certain characterization of a fold singular point for more general setting. 
Particularly we prove the following.

\begin{thm}\label{thm:crit-fold}
Let $f\colon U(\subset\R^2)\to\R^3$ be a frontal and $p$ a singular point of the first kind of $f$. 
Then $p$ is a fold singular point of $f$ if and only if 
there exists a local coordinate system $(u,v)$ around $p$ such that $f(u,v)=f(u,-v)$.
\end{thm}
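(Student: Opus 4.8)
The plan is to prove the two implications separately; the ``only if'' direction is essentially formal, and all the substance is in the ``if'' direction, where the key tool is the classical fact that a smooth function which is even in one variable is a smooth function of the square of that variable.

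For the ``only if'' part I would argue as follows. Suppose $p$ is a fold singular point, witnessed by diffeomorphism-germs $\phi$ on the source and $\Phi$ on the target with $\Phi\circ f\circ\phi^{-1}(u,v)=(u,v^2,0)$. Then $f\circ\phi^{-1}(u,v)=\Phi^{-1}(u,v^2,0)$, and the right-hand side is manifestly invariant under $v\mapsto-v$. Writing $(u,v)$ for the local coordinate system around $p$ determined by $\phi$, this is exactly the required identity $f(u,v)=f(u,-v)$.

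For the ``if'' part, assume there is a coordinate system $(u,v)$ centered at $p$ with $f(u,v)=f(u,-v)$. First, differentiating in $v$ at $v=0$ gives $f_v(u,0)=0$, so $\{v=0\}\subset S(f)$; since a singular point of the first kind is non-degenerate, $S(f)$ is a regular curve near $p$ and therefore coincides with $\{v=0\}$ there, with null direction $\eta=\partial_v$ and singular direction $\xi=\partial_u$. Next, applying the even-function factorization to each component of $f$ produces a smooth map-germ $F=F(u,t)$ with $f(u,v)=F(u,v^2)$; then $f_u=F_u(u,v^2)$ and $f_v=2v\,F_t(u,v^2)$, so the signed area density function satisfies
\[
\lambda=\det(f_u,f_v,\n)=2v\,D,\qquad D:=\det\bigl(F_u(u,v^2),F_t(u,v^2),\n\bigr).
\]
Since $\lambda$ vanishes identically on $\{v=0\}$, we have $\lambda_u(p)=0$; hence (using $\lambda=\hat{\lambda}\mu$ with $\mu>0$) non-degeneracy forces $\lambda_v(p)=2D(p)\neq0$, which means precisely that $F_u(p)$, $F_t(p)$, $\n(p)$ are linearly independent, i.e. $F$ is an immersion-germ at the origin. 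Finally, after a diffeomorphism $\Phi$ of the target that straightens the immersion $F$ to $(u,t)\mapsto(u,t,0)$, one gets $\Phi\circ f(u,v)=(u,v^2,0)$, so $f$ at $p$ is $\mathcal{A}$-equivalent to $(u,v)\mapsto(u,v^2,0)$ and $p$ is a fold singular point.

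The only non-formal step, and thus the main obstacle, is verifying that the ``halved'' map $F$ is an \emph{immersion}: this is where the two hypotheses genuinely cooperate, the symmetry producing $F$ and the non-degeneracy built into ``first kind'' supplying the rank-two condition via the signed area density. I would also note that straightening $F$ uses only a diffeomorphism of the target, so it is compatible with the source coordinates already being in normal form (the source change being the identity here), and that the first-kind assumption is exactly what makes $\xi=\partial_u$ and $\eta=\partial_v$ transverse, as in the fold model.
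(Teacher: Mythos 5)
Your proof is correct, and in the ``if'' direction it takes a genuinely different and more streamlined route than the paper. The paper first normalizes the first component by the source change $(s,t)=(f_1(u,v),v)$, then peels off the even part of each remaining component in three stages (Fact \ref{fact:division2}, then Fact \ref{fact:division}, then Fact \ref{fact:Whitney} applied to the quotient $\tilde b_i$), and finishes with two explicit target diffeomorphisms and one further source change $(s,t)\mapsto(s,t\sqrt{|\hat b_2|})$. You instead apply the Whitney lemma componentwise to $f$ itself to get the clean factorization $f=F\circ\sigma$ with $\sigma(u,v)=(u,v^2)$, and then locate all the analytic content in the single claim that $F$ is an immersion, which you extract from non-degeneracy via $\lambda=2vD$ with $D=\det(F_u,F_t,\n)$; this makes it transparent exactly where the non-degeneracy of the singular point enters (the paper's corresponding step is the rather terse remark that ``either $\hat b_2(p)$ or $\hat b_3(p)$ does not vanish by non-degeneracy''). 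The one step you should spell out is the final straightening: an immersion-germ $F\colon(\R^2,0)\to(\R^3,0)$ can indeed be carried to $(u,t)\mapsto(u,t,0)$ by a \emph{target} diffeomorphism alone --- take $\Phi^{-1}(X,Y,Z)=F(X,Y)+Z\,\n(X,Y)$, whose differential at the origin is $(F_u,F_t,\n)$ and hence invertible by your determinant computation --- because a generic ``straightening of the image'' would only give $\Phi\circ F=(\alpha,\beta,0)$ with $(\alpha,\beta)$ a plane diffeomorphism, leaving a residual source factor to absorb. With that line added, your argument is complete; your ``only if'' direction is also slightly more careful than the paper's, which asserts one ``may assume $f$ is given by $(u,v^2,0)$'' rather than observing, as you do, that $\Phi^{-1}(u,v^2,0)$ is even in $v$.
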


Before giving a proof, we prepare some facts which we need (cf. \cite{fsuy}).

\begin{fact}[the division lemma]\label{fact:division}
Let $h\colon U(\subset\R^2)\to\R$ be a $C^\infty$ function. 
If $h$ satisfies $h(u,0)=0$, then there exists a function $b$ on $U$ 
such that $h(u,v)=vb(u,v)$.
\end{fact}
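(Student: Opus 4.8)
The plan is to use the Hadamard-type trick based on the fundamental theorem of calculus, treating $u$ as a parameter and integrating in the $v$-direction. First I would observe that since $h(u,0)=0$, for each fixed $(u,v)$ the function $t\mapsto h(u,tv)$ vanishes at $t=0$, so by the fundamental theorem of calculus and the chain rule
\[
h(u,v)=h(u,v)-h(u,0)=\int_0^1\frac{d}{dt}h(u,tv)\,dt=\int_0^1 v\,h_v(u,tv)\,dt=v\int_0^1 h_v(u,tv)\,dt.
\]
This computation both produces the factor $v$ and tells us exactly what the quotient should be.

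Accordingly, I would define
\[
b(u,v)=\int_0^1 h_v(u,tv)\,dt,
\]
so that the desired identity $h(u,v)=v\,b(u,v)$ holds by construction. One technical point to record is that the segment joining $(u,0)$ to $(u,v)$ must lie in the domain for the integral to make sense; since the lemma is applied locally around the singular point, shrinking $U$ to a small rectangular (hence convex) neighborhood ensures this, so there is no loss of generality.

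The remaining work, which is the real content, is to verify that $b$ is of class $C^\infty$. Because $h$ is $C^\infty$, the integrand $(u,v,t)\mapsto h_v(u,tv)$ is $C^\infty$ jointly in its variables, and $b$ is obtained from it by integrating over the compact interval $[0,1]$. I would establish smoothness of $b$ by differentiation under the integral sign: each partial derivative in $u$ or $v$ of the integrand is continuous in $(u,v,t)$, and integration over a compact interval legitimizes the interchange of differentiation and integration, so the corresponding partial derivative of $b$ exists and equals the integral of the differentiated integrand. Iterating this argument and inducting on the order of differentiation shows that $b$ has continuous partial derivatives of every order, hence $b\in C^\infty$. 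The algebraic factorization $h=vb$ is immediate; the main obstacle is precisely this justification that the parameter integral defines a smooth function, and once it is in place the lemma follows at once.
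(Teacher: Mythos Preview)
Your argument is correct: it is the standard Hadamard-lemma proof via the fundamental theorem of calculus, and the smoothness justification by differentiation under the integral sign over the compact interval $[0,1]$ is exactly what is needed. The paper does not actually prove this statement; it records it as a known fact (with a reference) and uses it as a black box, so there is no ``paper's own proof'' to compare against. Your write-up would serve perfectly well as a self-contained proof of the lemma.
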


As a corollary of Fact \ref{fact:division}, we see the following.
\begin{fact}\label{fact:division2}
Let $h\colon U\to\R$ be a $C^\infty$ function. 
Then there exist functions $a$ and $b$ such that $h(u,v)=a(u)+vb(u,v)$.
\end{fact}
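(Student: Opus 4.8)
The plan is to reduce the statement to the division lemma (Fact \ref{fact:division}) by peeling off the value of $h$ along the line $\{v=0\}$. First I would set
\[
a(u):=h(u,0),
\]
which is a $C^\infty$ function of $u$, being the restriction of the $C^\infty$ function $h$ to the line $v=0$. Then I would introduce the auxiliary function
\[
\tilde{h}(u,v):=h(u,v)-a(u)=h(u,v)-h(u,0),
\]
which is again $C^\infty$ on $U$ and satisfies $\tilde{h}(u,0)=0$ by construction.

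The key step is to apply Fact \ref{fact:division} to $\tilde{h}$: since $\tilde{h}$ vanishes on $\{v=0\}$, there exists a $C^\infty$ function $b$ on $U$ with $\tilde{h}(u,v)=v\,b(u,v)$. Substituting back then gives
\[
h(u,v)=a(u)+\tilde{h}(u,v)=a(u)+v\,b(u,v),
\]
which is the desired decomposition, with $a$ a function of $u$ alone and $b$ a $C^\infty$ function on $U$.

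Since all of the analytic content (in particular the smoothness of the quotient $b$) is already contained in the division lemma, there is essentially no obstacle here: the only points that require checking are the smoothness of $a$ and the vanishing of $\tilde{h}$ on $\{v=0\}$, both of which are immediate. If one preferred a self-contained argument in place of invoking Fact \ref{fact:division}, one could instead exhibit $b$ explicitly via the fundamental theorem of calculus as
\[
b(u,v)=\int_0^1 h_v(u,tv)\,dt,
\]
whose smoothness follows by differentiation under the integral sign; this simultaneously reproves the division lemma and verifies $h(u,v)-h(u,0)=v\,b(u,v)$ directly.
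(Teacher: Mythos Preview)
Your proof is correct and is exactly the argument the paper has in mind: the paper states Fact~\ref{fact:division2} as an immediate corollary of the division lemma (Fact~\ref{fact:division}), and your reduction by setting $a(u)=h(u,0)$ and applying Fact~\ref{fact:division} to $h(u,v)-h(u,0)$ is precisely that corollary spelled out.
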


On the other hand, the following is known.
\begin{fact}[the Whitney lemma]\label{fact:Whitney}
Let $h\colon U\to\R$ be a $C^\infty$ function. 
If $h$ satisfies $h(u,v)=h(u,-v)$, then there exists a function $b$ on $U$ 
such that $h(u,v)=b(u,v^2)$.
\end{fact}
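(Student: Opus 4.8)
The plan is to construct the function $b$ directly as $b(u,s)=h(u,\sqrt{s})$ for $s\geq 0$ (extended suitably to $s<0$) and to verify that it is genuinely $C^\infty$; the whole content of the statement is the smoothness at $s=0$, which I would reach by first peeling off a finite Taylor expansion in the variable $v^2$ with the help of the division lemma (Fact \ref{fact:division}).

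First I would record the consequence of the hypothesis. The identity $h(u,v)=h(u,-v)$ says that $h$ is even in $v$; differentiating it an odd number of times in $v$ and setting $v=0$ gives $\partial_v^{2j+1}h(u,0)=0$ for every $j\geq 0$. In particular $h(u,v)-h(u,0)$ vanishes on $\{v=0\}$, so Fact \ref{fact:division} yields a smooth $c$ with $h(u,v)-h(u,0)=v\,c(u,v)$. Since the left-hand side is even, comparing the values at $v$ and $-v$ forces $c$ to be odd, whence $c(u,0)=0$; a second application of Fact \ref{fact:division} gives $c(u,v)=v\,d_1(u,v)$ with $d_1$ smooth. Thus $h(u,v)=a_0(u)+v^2 d_1(u,v)$ with $a_0(u)=h(u,0)$, and the same parity comparison shows that $d_1$ is again even in $v$.

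Next I would iterate. Applying the identical two-step division to the even function $d_1$, then to its remainder, and so on, produces for every $n$ a decomposition
\begin{equation*}
h(u,v)=\sum_{j=0}^{n} a_j(u)\,v^{2j}+v^{2n+2}\,d_{n+1}(u,v),
\end{equation*}
in which each $a_j$ is smooth in $u$, each $d_{n+1}$ is smooth and even in $v$, and $a_j(u)=d_j(u,0)$ (with $d_0=h$). Setting $s=v^2$, this exhibits the candidate $b(u,s)=h(u,\sqrt{s})$ as $\sum_{j=0}^{n}a_j(u)\,s^j+s^{n+1}\,d_{n+1}(u,\sqrt{s})$, so that the polynomial $\sum_{j\leq n}a_j(u)\,s^j$ is forced to be the order-$n$ Taylor polynomial of $b(u,\cdot)$ at $s=0$.

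The main obstacle is precisely this last smoothness verification: the finite expansions only display a candidate Taylor polynomial, and one must still show that $b$ is actually $n$-times differentiable in $s$ at $s=0$, with the remainder $s^{n+1}d_{n+1}(u,\sqrt{s})$ controlled, for every $n$. I would handle it by writing $d_{n+1}$ in the integral form of the Taylor remainder of $h$ in $v$, which makes $d_{n+1}(u,v)$ manifestly smooth and even; its evenness forces $\partial_v d_{n+1}(u,0)=0$, so $d_{n+1}(u,\sqrt{s})$ is differentiable at $s=0$, and differentiating under the integral sign controls the higher $\partial_s^{k}b$. Since $n$ is arbitrary, this gives $b\in C^\infty$ and hence $h(u,v)=b(u,v^2)$, as required.
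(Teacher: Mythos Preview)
The paper does not prove Fact~\ref{fact:Whitney}; it is quoted without argument as the classical Whitney even-function lemma (cf.\ the reference to \cite{fsuy}) and is only \emph{used} inside the proof of Theorem~\ref{thm:crit-fold}. There is therefore no proof in the paper to compare your approach to.

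On its own merits, your outline is the standard one: iterate the division lemma (Fact~\ref{fact:division}) to peel off a Taylor polynomial in $v^2$, then set $s=v^2$. The only delicate point, which you correctly isolate, is the smoothness of $b(u,s)=h(u,\sqrt{s})$ at $s=0$. Your justification of that step is too loose as written. Merely having
\[
b(u,s)=\sum_{j=0}^{n}a_j(u)\,s^{j}+s^{\,n+1}d_{n+1}(u,\sqrt{s})
\]
for every $n$ exhibits candidate Taylor polynomials of all orders, and that alone does not force $b\in C^\infty$. What makes the argument go through is that the remainder $R_n(u,s)=s^{\,n+1}d_{n+1}(u,\sqrt{s})$ is genuinely $C^{n}$ in $s$: one $s$-derivative gives
\[
\partial_s R_n(u,s)=(n+1)\,s^{\,n}d_{n+1}(u,\sqrt{s})+\tfrac12\,s^{\,n}\cdot\sqrt{s}\,\partial_v d_{n+1}(u,\sqrt{s}),
\]
and since $d_{n+1}$ is even in $v$, its $v$-derivative is odd, so $\sqrt{s}\,\partial_v d_{n+1}(u,\sqrt{s})=s\,e(u,\sqrt{s})$ for some smooth even $e$. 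Thus $\partial_s R_n$ is again a finite sum of terms $s^{m}\cdot(\text{smooth even function of }\sqrt{s})$ with $m\ge n$, and induction yields $R_n\in C^{n}$. Since $n$ is arbitrary, $b\in C^\infty$ on $\{s\ge 0\}$; a Seeley-type extension then handles $s<0$. If you make this induction explicit (instead of the sentence about ``differentiating under the integral sign''), the proof is complete.
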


\begin{proof}[Proof of Theorem \ref{thm:crit-fold}]
If $p$ is a fold singular point of $f$, 
then we may assume that $f$ is given by $f(u,v)=(u,v^2,0)$. 
This obviously satisfies the condition that $f(u,v)=f(u,-v)$. 
Thus we show the opposite side in the following.

Assume that there exists a local coordinate system $(u,v)$ around $p$ such that $f(u,v)=f(u,-v)$. 
Without loss of generality, we may assume that $p=(0,0)$. 
First, we note that $S(f)=\{v=0\}$ holds on $U$ 
since $f_v(u,v)=-f_v(u,-v)$, and hence $f_v(u,0)=0$.
Since $\rank df_p=1$, we may assume that $(f_1)_u(p)\neq0$, where $f=(f_1,f_2,f_3)$. 
Then the map $\phi\colon(u,v)\mapsto(s,t)=(f_1(u,v),v)$ gives a coordinate change on the source around $p$. 
Thus one may have 
\begin{equation}\label{eq:function1}
g(s,t)(=f\circ\phi^{-1}(s,t))=(s,g_2(s,t),g_3(s,t)),
\end{equation}
where $g_i$ $(i=2,3)$ are some $C^\infty$ functions of $s$, $t$. 
We note that the map $g$ satisfies $g(s,t)=g(s,-t)$ by the construction. 

On the other hand, by Fact \ref{fact:division2}, there exist functions $a_i(s)$ and $b_i(s,t)$ $(i=2,3)$ 
such that $g_i(s,t)=a_i(s)+tb_i(s,t)$. 
Moreover, since $(g_i)_t(s,0)=0$ $(i=2,3)$, 
there exist functions $\tilde{b}_i(s,t)$ such that $b_i(s,t)=t\tilde{b}_i(s,t)$ by Fact \ref{fact:division}. 
Thus the map $g$ as in \eqref{eq:function1} can be written as 
\begin{equation}\label{eq:function2}
g(s,t)=(s,a_2(s)+t^2\tilde{b}_2(s,t),a_3(s)+t^2\tilde{b}_3(s,t)).
\end{equation}
Further, noticing that $g(s,t)=g(s,-t)$, there exist functions $\hat{b}_i$ $(i=2,3)$ such that 
$\tilde{b}_i(s,t)=\hat{b}_i(s,t^2)$ by Fact \ref{fact:Whitney}. 
Thus the map $g$ as in \eqref{eq:function2} can be written as 
\begin{equation}\label{eq:function3}
g(s,t)=(s,a_2(s)+t^2\hat{b}_2(s,t^2),a_3(s)+t^2\hat{b}_3(s,t^2)).
\end{equation}
We set $\Phi\colon\R^3\to\R^3$ as 
\begin{equation}\label{eq:diffeo1}
\Phi(X,Y,Z)=(X,Y-a_2(X),Z-a_3(X)).
\end{equation}
This gives a local diffeomorphism on $\R^3$.
Composing $g$ as in \eqref{eq:function3} and $\Phi$ as in \eqref{eq:diffeo1}, 
we have 
\begin{equation*}
h(s,t)=\Phi\circ g(s,t)=(s,t^2\hat{b}_2(s,t^2),t^2\hat{b}_3(s,t^2)).
\end{equation*}
Here we remark that either $\hat{b}_2(p)$ or $\hat{b}_3(p)$ does not vanish by non-degeneracy. 
Thus we may suppose that $\hat{b}_2(p)\neq0$. 
In this case, a map $\tau\colon (s,t)\mapsto(x,y)=\left(s,t\sqrt{\left|\hat{b}_2(s,t^2)\right|}\right)$ 
gives a local coordinate change on the source. 
Thus by a coordinate change, we have 
\begin{equation}\label{eq:function4}
k(x,y)(=h\circ\tau^{-1}(x,y))=(x,y^2,y^2B(x,y^2)),
\end{equation}
where $B$ is a some $C^\infty$ function of $x$, $y$. 
Setting a map $\Psi\colon\R^3\to\R^3$ as 
\begin{equation}\label{eq:diffeo2}
\Psi(X,Y,Z)=(X,Y,Z-YB(X,Y)),
\end{equation}
this map $\Psi$ gives a local diffeomorphism. 
By \eqref{eq:function4} and \eqref{eq:diffeo2}, it holds that 
$$\Psi\circ k(x,y)=(x,y^2,0).$$
Therefore we have the conclusion.
\end{proof}

We note that this singular point relates to the real analytic extension of 
{\it zero mean curvature surfaces} in $\R^3_1$ (cf. \cite{fkkruy2,fkkruy,fkkrsuyy}). 
Further, we remark that there are no extended CMC surfaces in $\R^3_1$ admitting fold singular points (\cite[Theorem 1.1]{hks}). 

\begin{acknowledgements}
The authors would like to express their sincere gratitude to Professor Miyuki Koiso 
for fruitful discussions. 
They are also grateful to Professors Atsufumi Honda, Masaaki Umehara and Kotaro Yamada for their valuable advices and comments. 
\end{acknowledgements}

%%%%%TEXT END%%%%%

\end{document}